\pgfplotsset{compat=newest}
\newtheorem{theorem}{Theorem}[section]
\newtheorem*{theorem*}{Theorem}
\newtheorem{lemma}[theorem]{Lemma}
\theoremstyle{theorem}
\newtheorem{proposition}[theorem]{Proposition}
\newtheorem*{proposition*}{Proposition}
\newtheorem{corollary}[theorem]{Corollary}
\newtheorem*{corollary*}{Corollary}
\theoremstyle{definition}
\newtheorem{definition}[theorem]{Definition}
\newtheorem{remark}[theorem]{Remark}
\newtheorem{example}[theorem]{Example}
\def\C{\mathbb{C}}
\def\R{\mathbb{R}}
\def\N{\mathbb{N}}
\def\Z{\mathbb{Z}}
\newcommand{\exend}{\hfill $\Diamond$}
\def\ab{{\mathcal{A}}}
\def\alet{\alpha}
\def\blet{\beta}
\def\clet{\gamma}
\def\word{\pmb{\omega}}
\newcommand{\id}[1]{{\mathsf{#1}}}
\def\seqsp{{\Sigma}}
\def\subs{\mathcal S}
\def\subsdual{{\mathcal S_*}}
\def\domain{\mathsf{A}}
\DeclareMathOperator{\shift}{\sigma}
\def\T{\pmb{\tau}}
\def\edges{\mathsf{E}}
\def\enn{\mathbf{n}}
\newcommand{\plmax}[2]{\overline{\poslist{{#1}}}{(#2)}}
\newcommand{\addr}[1]{\mathbf{A}_{#1}}
\newcommand{\poslist}[1]{{\pmb{\id{a}}}_{{#1}}}
\newcommand{\plmin}[2]{\underline{\poslist{{#1}}}{(#2)}}
\newcommand{\loc}[1]{{\mathsf{T}({#1})}}
\newcommand{\locn}[1]{{\mathsf{T}_n({#1})}}
\def\lft{{\phi}}
\def\Lft{{\Phi}}
\def\bigo{\mathcal{O}}
\def\II{{\mathbf{I}}}
\def\lends{\mathfrak{LEs}}
\def\Partition{\mathcal{B}}
\def\PartitionI{\mathcal{I}}
\def\mtcong{\Phi}
\def\mtconj{\mtcong}
\def\iiet{\mathfrak{F}}
\def\numa{|\ab|}
\def\vvv{{\mathcal{V}}}
\def\fib{\subs_{fib}}
\def\PD{\subs_{PD}}
\def\specmeas{z}
\def\coinj{j_c}
\def\ed{{E_\delta}}
\def\dd{{D_\delta}}
\def\efull{{E}}
\begin{document}

 \title[Flow view and IIET]{Flow views and infinite interval exchange transformations for recognizable substitutions}
 
 \author{Natalie Priebe Frank}
 \address{Department of Mathematics and Statistics, Vassar College, Box 248, Poughkeepsie, NY  12604, USA}
 \email{nafrank@vassar.edu}

\everymath{\displaystyle}

\keywords{Iterated morphisms, automatic sequences, substitution tilings}
\subjclass[2020]{37B10, 37B52}
\maketitle

\centerline{\em For Uwe, whose good humor and serious dedication are to be emulated.}

\begin{abstract}
A flow view is the graph of a measurable conjugacy $\mtcong$ between a substitution or S-adic subshift $(\seqsp,\shift, \mu)$ and an  exchange of infinitely many intervals in  $([0,1], \iiet, m)$, where $m$ is Lebesgue measure.
A natural refining sequence of partitions of $\seqsp$ is transferred to $([0,1],m)$ using a canonical addressing scheme, a fixed dual substitution $\subsdual$, and a shift-invariant probability measure $\mu$.
On the flow view, $\T \in \seqsp$ is shown horizontally at a height of $\mtcong(\T)$ using colored unit intervals to represent the letters. 

The infinite interval exchange transformation $\iiet$ is well approximated by exchanges of finitely many intervals, making numeric and graphic methods possible. We prove that in certain cases a choice of dual substitution guarantees that $\mtcong$ is self-similar.  We discuss why the spectral type of $\mtcong \in L^2(\seqsp, \mu),$ is of particular interest. As an example of utility, some spectral results for constant-length substitutions are included.

\end{abstract}

\section{Introduction}
In this paper we provide a canonical method for constructing measurable conjugacies between a given recognizable substitution or $S$-adic subshift  $(\seqsp,\shift, \mu)$ and an  exchange of infinitely many intervals in  $([0,1], \iiet, m)$, where $m$ is Lebesgue measure. (We say {\em infinite interval exchange transformation} or IIET for short). The flow view is made by visualizing the bi-infinite sequence $\T \in \seqsp$ as a tiling with colored interval tiles shown in $\R^2$ at a height of $\mtcong(\seqsp)$ to create a sort of `stack' of tilings. The sequences are stacked up so that they are close to others that are the same near the origin, which is why all flow views look the most `organized' near the $y$-axis.

The current paper is situated within the field of Aperiodic Order,  to which Uwe Grimm gave the major works \cite{BaakeGrimmAperiodicOrder, BaakeGrimmAperiodicOrderVol2} and countless other contributions. 
Two things that characterize Uwe's work are the thorough analysis of examples and extensive use of visualization to understand physical and mathematical phenomena. This paper carries on some of those traditions.
Not only does the flow view allow us to ``see" the full sequence spaces, the graphs of the IIETs produce a visual representation of the shift action. We show how our scheme is applied to the iconic examples such as the Fibonacci, Thue-Morse, period-doubling, and Rudin-Shapiro substitutions along with sequences that arise from procedures like Chacon's cutting-and-stacking \cite{ChaconWM}. 

Interval exchange transformations have long been of interest (see, for example, the early work \cite{KeaneIET1975} on minimality and ergodic measures) and are studied in relation to translation surfaces. Importantly, it is known \cite{EverythingCutStack1985} that all probability measure preserving transformations are measurably conjugate to cut-and-stack transformations on $[0,1]$  , implying that there is always an IIET representing a symbolic dynamical system in one dimension. 

IIETs (and translation surfaces of infinite genus such as the Chamanara surface \cite{ChamanaraSurface}), though difficult to understand, are of increasing interest.
Minimal shifts with zero topological entropy have been shown to be topologically conjugate to IIETs \cite{lopez_narbel_2017} using a different technique than that presented here.  \cite{Hooper2015} provides an important study of the invariant measures possible for IIETs. A class of flat surfaces from infinite interval exchange transformations is constructed and analyzed in \cite{LindseyTrevino}. Rational IIETs are studied in \cite{HooperRationalIIET}.  IIETs using a lexicographical address scheme have been constructed and analyzed for some examples of substitution sequences (see, e.g., \cite{LopezNarbel2023}).

Here is an outline of the paper along with some of the main results. In section \ref{Sec:symdefs} we set up the basic definitions and notation for a substitution $\subs$ to have a primitive, recognizable, and/or minimal subshift. We also provide the details of the address scheme that underlies our construction. 

In section \ref{sec:mtcong} we construct a map $\mtconj$ we call a ``canonical isomorphism" of the subshift $(\seqsp, \mu)$, where $\mu$ is a shift-invariant probability measure, to $([0,1], m)$, where $m$ is Lebesgue measure.
We do this using the addressing scheme and some fixed `dual' substitution $\subsdual$ (i.e., one whose substitution matrix is the transpose of that of $\subs$). 

\begin{proposition*}[see Proposition \ref{Pf:mtcong}]
Let $\subs$ be a recognizable substitution whose subshift $(\seqsp, \mu)$ is minimal and let $\mtcong: (\seqsp, \mu) \to ([0, 1], m)$ be a canonical isomorphism. Then $\mtcong$ is measure preserving, well-defined and uniformly continuous everywhere,  bijective almost everywhere, and at most $2|\ab|$:1.
\end{proposition*}

In section \ref{sec:IIET} we define the infinite interval exchange transformation $\iiet$ and prove the following.

\begin{theorem*}[\ref{thm:iiet}]
Let $\subs$ be a recognizable substitution with minimal subshift and let $\mtconj: (\seqsp, \mu) \to ([0,1], m)$ be a canonical isomorphism given by $\subsdual$. The IIET $\iiet$ given by $\subsdual$ is defined almost everywhere, and $\mtconj$ is a measurable conjugacy between $(\seqsp,\shift, \mu)$ and $([0,1],\iiet,m)$.
\end{theorem*}

We are also able to provide accurate approximations for the IIET with finite interval exchange transformations. Here $\lambda$ is the Perron-Frobenius eigenvalue of the substitution matrix of $\subs$, $|\domain|$ is the size of the set of addresses in our address scheme, and $|\ab|$ is the size of the original alphabet the substitution.

\begin{corollary*}[\ref{cor:iietest}]
For any $n \in \N$ there is an exchange of $n(|\domain| - |\ab|) + |\ab|$ 
intervals that is equal to $\iiet$ on all but $|\ab|$ intervals of total measure $\le \lambda^{-n}$.
\end{corollary*}

The graphs of canonical IIETs have a tendency to look like they have some form of self-similarity. When $\subs$ satisfies a simple combinatorial condition called `proper', there is a choice of canonical isomorphism for which the corresponding IIET is formally self-similar in the following sense.

\begin{proposition*}[\ref{prop:self-similarilty}]
Let $\subs$ be a recognizable and proper substitution with minimal subshift, and suppose $|\subs(\alet)|>1$ for each $\alet \in \ab$. Then $\subs$ has a canonical IIET $\iiet$ for which there is a constant $\kappa \in [0,1)$ such that \[
\iiet(x) = \lambda(\iiet(x/\lambda) + \kappa) \text{ for a.e. } x \in [0,1].
\]
\end{proposition*}

In section \ref{sec:spectral} we discuss the interaction of our construction with spectral theory. We include the following two results for constant-length substitutions.

\begin{proposition*}[\ref{prop:constantlengthfinite}]
If $\subs$ is a primitive recognizable constant-length substitution with expansion factor $K$ then $\left(\iiet^{K^n}(x)\right)_{n=1}^\infty$ has at most $|\ab|$ accumulation points for a.e. $x \in [0,1]$. 
\end{proposition*}

\begin{theorem*}[\ref{thm:coincthm}] Let $\subs$ be a primitive recognizable substitution of constant length $K>1$ with IIET $\iiet$. Then  $\lim_{n \to \infty}\iiet^{K^n}(x) =x$ almost everywhere if and only if $\subs$ has a coincidence.
\end{theorem*}

We believe there may be analogues to these results in the non-constant length case. Several graphs are provided that illustrate how powers of the IIETs behave for systems with different dynamical/spectral properties.

In section \ref{Sec:generalizes-adic} we describe how to adapt the construction to S-adic systems. We also show how a flow view for a substitution can be adapted to any tiling space made from interval tiles representing the letters. 
Again we provide examples that illustrate how the dynamics of a system are represented in the flow view.  We conclude the paper in section \ref{sec:questions} with a selection of questions and observations raised by this investigation.

Our construction has several features: 

\smallskip
\noindent{\bf Efficiency.} For each $\epsilon > 0$ a subset of Lebesgue measure $\ge 1-\epsilon$ is contained in $\bigo(|\ln(\epsilon)|)$ intervals in the domain of $\iiet$. We will show that for any $n \in \N$,  there is an {\bf interval exchange tranformation $\iiet_n$} that is equal to $\iiet$ on $\bigo(n)$ intervals and differs on a set of measure $\le \lambda^{-n}$ for some $\lambda>1$. Figure \ref{IIETIntro} shows  $\iiet_7$ for the {\bf Fibonacci} ($\alet \mapsto \alet\blet, \, \blet \mapsto \alet$) and $\iiet_5$ for {\bf Thue--Morse} ($\alet \mapsto \alet\blet,  \,\blet \mapsto \blet\alet, $) substitutions. These iterations are high enough that the graphs shown don't differ much from the IIET to the naked eye.
\exend
\begin{figure}[ht]
\centering
\includegraphics[width=.35\textwidth]{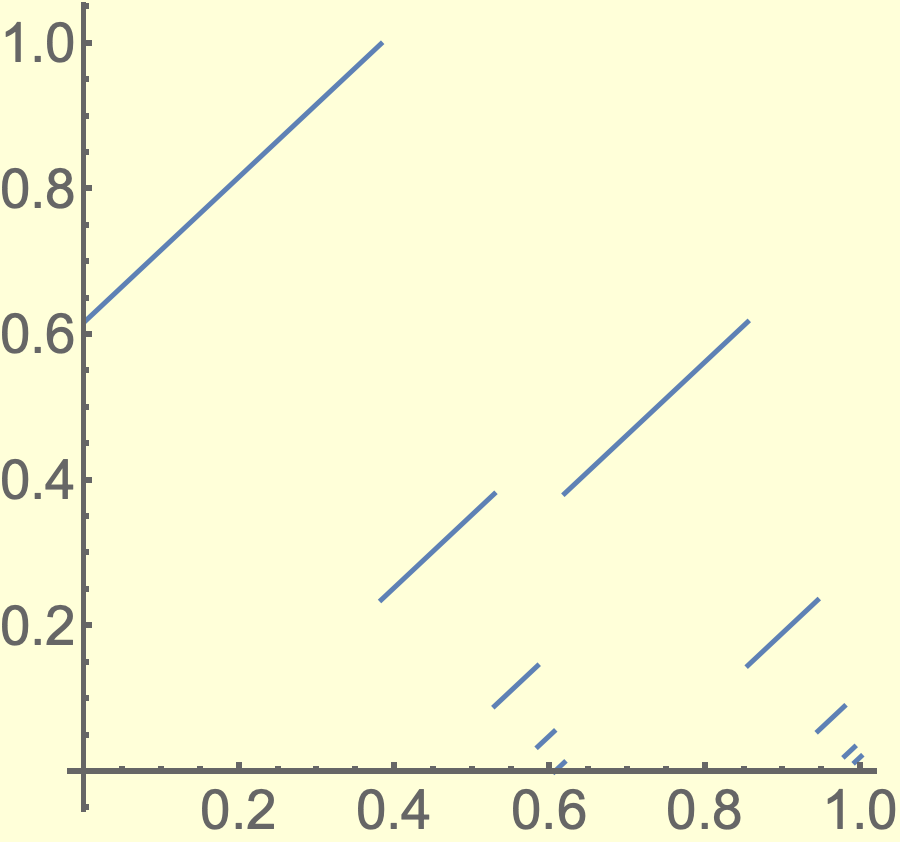} \hskip 1em \includegraphics[width=.35\textwidth]{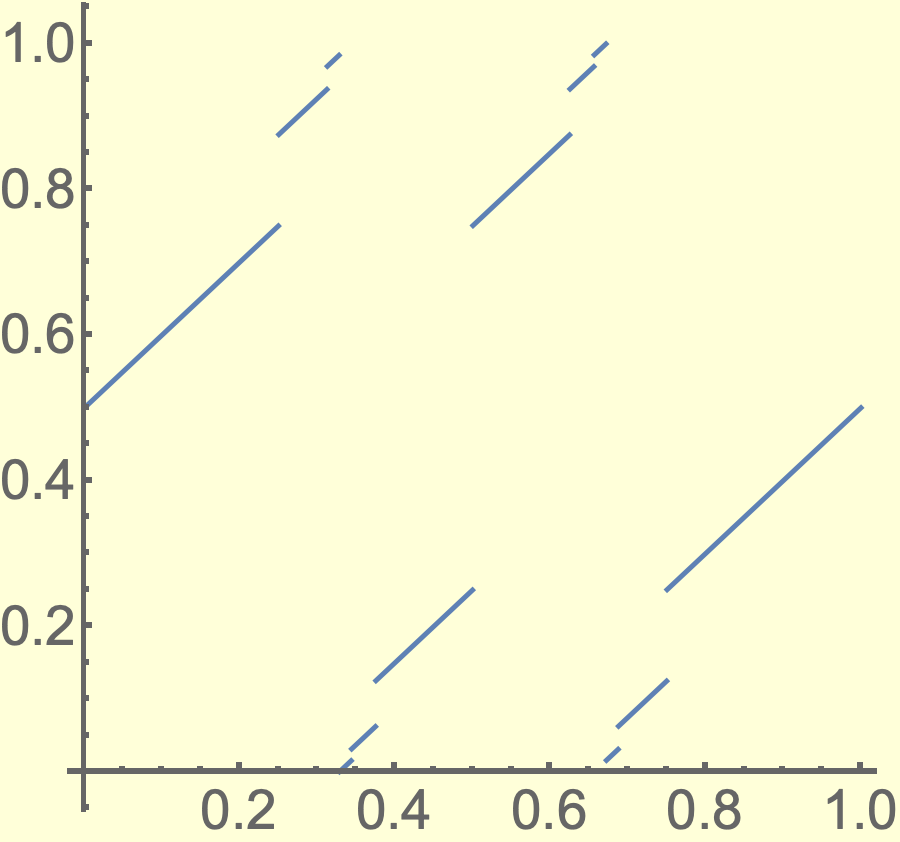}
\caption{(Approximations to the) canonical IIETs for Fibonacci (left) and Thue--Morse (right) substitutions.}
\label{IIETIntro}
\end{figure}

\smallskip
\noindent{\bf Visualization.} The graph of the map $\mtcong: \seqsp \to [0,1]$, called a {\bf \em flow view}, is a picture of every sequence in $\seqsp$ stacked up between $0$ and $1$ on the $y$-axis. It literally graphs the a.e.\ one-to-one correspondence between $[0,1]$ and the subshift by showing each $\T \in \seqsp$  in colored unit interval tiles at a height of $\mtcong(\T)$. 
Figure \ref{FlowViewIntro} shows the central portion of the flow views for the Fibonacci and  Thue--Morse substitutions. The vertical black line is the interval from 0 to 1 on the $y$-axis. In each, the line at $1/e$ is shown. The sequence/tiling represented by $1/e$ is the intersection of this line with the flow view, with $a$ as green and $b$ as blue unit intervals, where $1/e$ was chosen arbitrarily because it is irrational with respect to the expansion constants. The flow view indicates that for the Fibonacci, this $\T= \ldots \beta \alpha \alpha \beta \alpha .\alpha \beta \alpha \beta \alpha \alpha   \ldots$. The Thue-Morse flow view indicates that the $\T$ going to $1/e$ must be $\T = \ldots \beta . \alpha \alpha \beta \beta \alpha \ldots$.
\exend

\begin{figure}[ht]
\centering
\includegraphics[width=.9\textwidth]{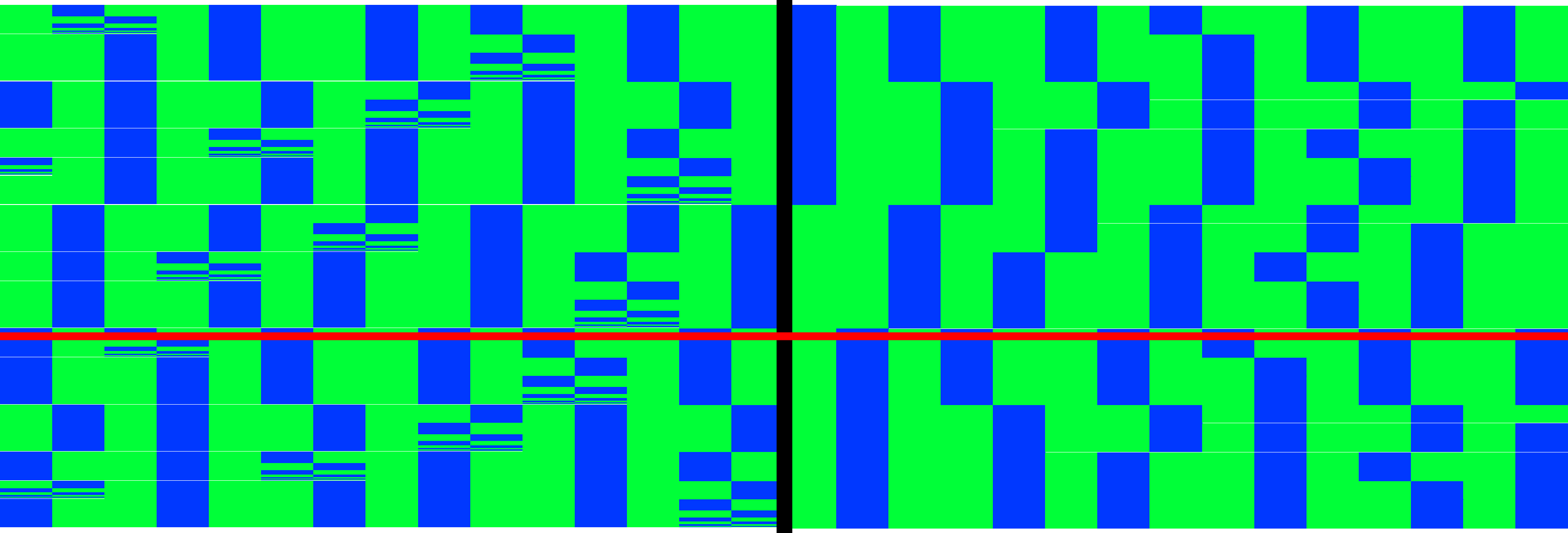}\\\vskip 1em
\includegraphics[width=.9\textwidth]{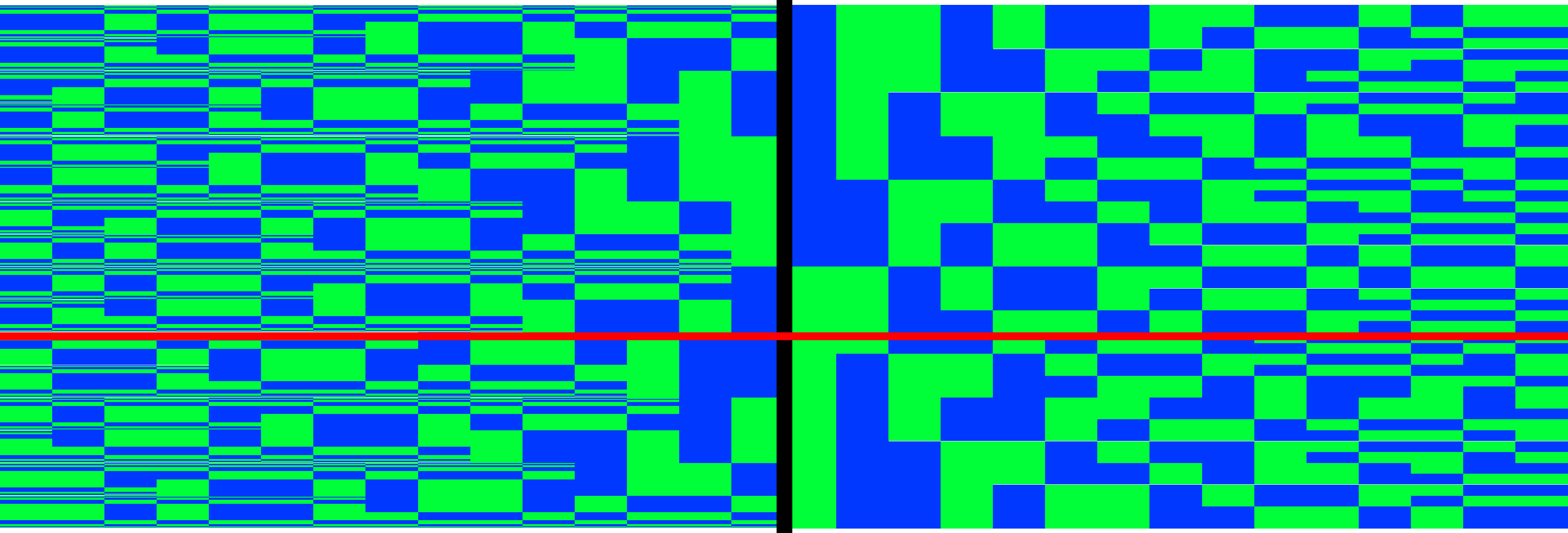}
\caption{Flow views for Fibonacci (top) and Thue--Morse (bottom) subshifts. The red line highlights the $\T\in \seqsp$ for which $\mtcong(\T) ={1}/{e}$.}
\label{FlowViewIntro}
\end{figure}
\smallskip
\noindent{\bf Generalizations.} The construction works for a large class of {\bf S-adic systems} and the adaptations necessary are provided. The continuous analogues, self-similar and fusion tilings of $\R$, are suspensions and therefore the results apply for a transversal. The construction works in some higher dimensional situations to produce commuting IIETs on $[0,1]$. \exend

\smallskip
\noindent{\bf Applications.} The {\bf spectral theory} of the subshift can be investigated in $L^2([0,1])$ directly. Moreover, $\mtcong$  is a natural element of $L^2(\seqsp, \mu)$ whose spectral decomposition is particularly important.  A few results are presented for constant-length substitutions to show the possibilities. Because of the intimate connection between {\bf translation surfaces} and interval exchange transformations, our IIETs provide an unlimited stable of translation surfaces that are probably of infinite genus.
(We do not know a simple proof of infinite genus but it is likely to be the case for most substitutions.) The graphs of our IIETs show types of {\bf self-similarity properties}, which is expected to simplify the basic analysis of their translation surfaces.
\exend

\section{Setting: symbolic and substitution dynamical systems}
\label{Sec:symdefs}
We briefly review and set notation for substitution sequences and introduce some necessary terminology. For a more thorough introduction see \cite{Kitchens, Fogg} and the recent survey \cite{akiyama2020substitution}. 
 
\subsection{Symbolic dynamics}\label{sec:symbdef}

A finite set $\ab$ is taken to be the {\bf \em alphabet} with elements $\alet \in \ab$ known as {\bf \em letters}. A {\bf \em word} is a function $\word: \{j, j+1, \ldots, k\}\to \ab$ where $j \le k \in \Z$. The {\bf \em length} of a word is $|\word|=k-j+1$. We use the notation $\word[i, ..., n]$ for a {\bf \em subword} of $\word$, where $j \le i \le n \le k$ is assumed. We use ordinary function notation $\word(i)$ and refer to the element of $\word$ at $i$. A sequence in $\ab^\Z$ is an infinite word.

The set $\ab^+$ is the set of all words of finite length and $\ab^\Z$ is the set of all (bi)infinite sequences on $\ab$. We endow $\ab^\Z$ with a  metric that determines the distance between $\T, \T' \in \ab^\Z$ based on the largest interval around the origin on which they are identical. Any choice will generate the product topology on $\ab^\Z$. We use the following. 

\begin{definition}
For any  $\T$ and $\T' \in \ab^{\Z}$ with $\T \neq \T'$, define $N(\T, \T')=\inf \{n \geq 0 : \T[-n, \ldots, n] \neq \T'[-n, \ldots, n]\}.$
 Then 
 \[d(\T,\T')=\begin{cases}
  \exp(-N(\T,\T'))& \T\neq\T' \\
  0 &\T=\T' \end{cases}. \] 
  \end{definition}

There is a {\bf \em shift} action $\shift: \ab^\Z \to \ab^\Z$ by elements of $\Z$ given by
\begin{equation}
\left(\shift(\word)\right)(k) = \word(k + 1) \quad \text{ for all } k \in \Z.
\end{equation}
The shift moves $\word$ one unit to the left, so that whatever was at $1$ in $\word$ is at $0$ in $\shift(\word)$. The shift is continuous and the resulting dynamical system is known as the {\bf \em full shift} $(\ab^\Z,\shift)$. Any closed, shift-invariant subset $\seqsp \subset \ab^\Z$ inherits the metric topology and shift action; the pair $(\seqsp,\shift)$ is known as a {\bf \em subshift}.

\subsection{Substitutions and their dynamical systems (see \cite{Fogg,Queffelec})}
 A {\bf \em  substitution rule} for $\ab$ is a rule that takes each letter in $\ab$ to a nonempty finite word\footnote{These are also called non-erasing morphisms.}. For each $\alet \in \ab$ we write $\subs(\alet) = \alet_0 \alet_2 ... \alet_{\ell-1}$, where $\ell = |\subs(\alet)|\ge 1$, and the domain of $\subs(\alet)$ is $\{0, 1, \ldots, \ell-1\}$ . 
 
We think of applying a substitution to the word $\word$ by concatenation, but we need to be a little careful about the domain.  Borrowing from tiling terminology, we will call $\subs^n(\alet)$ an {\bf \em $n$-supertile of type} $\alet$ and define it recursively by:
\begin{equation}
\subs^n(\alet) = \subs^{n-1}(\alet_0) \, \subs^{n-1}(\alet_1)\, ... \,\subs^{n-1}(\alet_{\ell-1}), 
\label{eqn:supertiledef}
\end{equation}
where the domain of $\subs^n(\alet)$ is $\left\{0, 1, \ldots, \sum_{i = 0}^{\ell - 1} | \subs^{n-1}(\alet_i)|-1\right\}$.
\vskip 1 em
Let $\mathcal{L}$ be the set of all subwords of all supertiles of $S$. Then $\mathcal{L}$ determines a subshift of $\seqsp \subset \ab^\Z$ by proclaiming
$\T \in \ab^\Z$ to be {\bf \em admitted} by $\subs$ if and only if every finite subword of $\T$ is, up to a shift of its domain, an element of $\mathcal{L}$. 

\begin{definition}
When the set $\seqsp = \{\T \in \ab^\Z \, | \, \T \text{ is admitted by } \mathcal{L}\}$ is nonempty, endowed with the subspace topology and shift $\shift$, and a shift-invariant Borel probability $\mu$, we call the triple
 $(\seqsp,\shift,\mu)$ the {\bf \em subshift of} $\subs$ or more generally a {\bf \em substitution subshift}.
\label{def:substitutionsubshift}
\end{definition}

We would like to define an action of substitution on elements of $\seqsp$. If we use sequence notation to write $\T=(\alet_n)_{n \in \Z}$, we certainly can define $\subs(\T)$ to be $\ldots \subs(\alet_{-1}) \subs(\alet_0) \subs(\alet_1)\ldots$, but there is not a natural location for the start of $\subs(\alet_0)$ (or any other supertile). For simplicity we define it so that the domain of the supertile $\subs(\alet_0)$ within $\subs(\T)$ begins at $0$. This action is not invertible, since there are elements in $\seqsp$ where the origin is not at the beginning of a substituted tile. For any $n \in \N$, every element of $\seqsp$ takes the form $\shift^k(\subs^n(\T))$ for some $\T \in \seqsp$, $\alet_0 \in \ab$, and $k \in 0, 1, \ldots, |\subs^n(\alet_0)| -1$ (\cite[Prop 6.2]{CanteriniSiegel2001prefix}).  The notion of recognizability tells us when there is a type of `desubstitution' on sequences: 

\begin{definition}
We say a substitution is {\em recognizable} if for every $\T \in \seqsp$ there is a unique $\T'$  and some $0 \le k \le |\subs(\T'(0))|-1$ for which $\T = \shift^k(\subs(\T'))$.
\end{definition}
Recognizability extends to supertiles of any level: for any $\T \in \seqsp$ there is a unique $\T'$ and $k \in \{0,1, \ldots |\subs^n(\T')| -1\}$ with $\T = \shift^k(\subs^n(\T'))$.

The {\bf \em cylinder set} for $\alet \in \ab$ is defined to be 
\begin{equation}
[\alet] = \left\{\T \in \seqsp \text{ with } \T(0) = \alet\right\}.
\label{eqn:aletcylinder}
\end{equation}
We define $\subs^n([\alet]) = \{\subs^n(\T), \T \in [\alet]\}$. The topology of $\seqsp$ is generated by sets of the form $\shift^k(\subs^n([\alet])),$ where $\alet \in \ab$ and $0 \le k \le |\subs^n(\alet)| -1$  (see e.g. \cite{DurandHostSkau1999, CanteriniSiegel2001prefix}). Sets of the form $\shift^k(\subs^n([\alet]))$ with  $0\le k \le |\subs^n(\alet)|-1 $ are called {\bf \em $n$-cylinders}.

\begin{example}
Our main illustrating example will be the {\em period-doubling} substitution $\PD$ defined by $A \mapsto AB, B \mapsto AA$. We compute the first few supertiles of type $A$ to be
\[A \mapsto AB \mapsto ABAA \mapsto ABAAABAB \mapsto \cdots.\]
Sequences in $[A]$ take the form $\T=\cdots .A\cdots,$ where the decimal point is intended to denote the location of the entry at 0. Sequences in
the $3$-cylinder $\shift^3(\PD^3([A])$ take the form \[\T = \cdots ABA.AABAB\cdots.\]
The period-doubling substitution is clearly recognizable: wherever a $B$ appears in $\T$, it must be the second letter of an $A$ supertile. Knowing the location of just one $B$ in $\T$ thus determines where all the 1-supertiles of $\T$ are.
\end{example}

\begin{remark}
Depending on the substitution, there can be a difference between the set of all sequences for which the word beginning at the 0 is equal to $\subs(\alet)$ and $\subs([\alet])$ as we defined it. This can happen if there are allowable words that are the same word as $\subs(\alet)$ but that are not actually supertiles by recognition. For instance, the word  $AA$ appears in elements of the period-doubling substitution subshift both as $\subs(B)$ but also in the middle of $\subs(B)\subs(A)$.
Our use of the term `cylinder' in `$n$-cylinder' may thus be nonstandard.
\end{remark}

Suppose we write $\ab = \{\alet_1, \alet_2, \ldots, \alet_{|\ab|}\}$.
The {\bf \em transition\footnote{also known as the {\em substitution matrix} or {\em abelianization} of the substitution} matrix $M$}  of $\subs$ is the $|\ab| \times |\ab|$ matrix with entries $M_{ij}$ given by the number of times $\alet_i$ appears in the word $\subs(\alet_j)$. It is clear that $M$ is nonnegative; $M$ is said to be {\bf \em primitive} if there is some $k$ for which every entry of $M^k$ is positive. Primitive or not, there is a positive eigenvalue $\lambda$ called the {\bf \em Perron-Frobenius eigenvalue} for which $|\lambda| \ge |\lambda'|$ for any other eigenvalue $\lambda'$ of $M$. We call $\lambda$ the {\bf \em expansion factor} of $\subs$.

The matrix $M$ keeps track of the lengths of supertiles: \[(1, 1, \ldots,1) M^n = (|S^n(\alet_1)|, |S^n(\alet_2)|, \ldots, |S^n(\alet_{\numa})|).\]
A right eigenvector for $\lambda$ represents the relative frequencies of letters in $\ab$ for an ergodic component of $\seqsp$.
In the S-adic case, frequencies will be compatible with the sequence of transition matrices in a similar way (see \cite{akiyama2020substitution, BezuglyiKarpelSurvey}). Given a nonnegative right probability eigenvector $\vec{r}$ for $\lambda$, there is an invariant measure $\mu$ such that 
\begin{equation}
\mu([\alet_j]) = \vec{r}(j) \text{ and } \mu(\subs^n([\alet_j])) = \vec{r}(j)/\lambda^n\text{ for all } j = 1, 2, ..., \numa. \label{eq:evectmeasure}
\end{equation}  When $M$ is primitive, $\lambda$ is unique and has a unique right probability eigenvector with no zero entries. 

\begin{example}
Continuing with the period-doubling substitution $\PD$ defined by $A \mapsto AB, B \mapsto AA$, we compute
the transition matrix to be $M = \left(\begin{smallmatrix}
1 & 2 \\ 1 & 0
\end{smallmatrix}\right)$ with right Perron eigenvector $(2/3, 1/3)$. The subshift has a unique shift-invariant probability measure $\mu$ for which $\mu([A]) = 2/3$ and $\mu([B]) = 1/3$. The measures of other Borel sets can be deduced from the fact that $\mu$ is shift-invariant and that for $C \in \{A, B\}$, $\mu(\PD^n([C])) = 2^{-n}\mu([C])$.
\exend
\end{example}

\subsection{The address scheme}
\label{sec:addreses}
Here we develop the address scheme needed for our construction. It is fairly standard, appearing for instance in \cite{CanteriniSiegel2001prefix}, but we give it notation adapted to our purposes. For each $n = 0, 1, 2, ...$ we know that if $\subs$ is recognizable then every $\T \in \seqsp$ is an element of $\shift^k(\subs^n([\alet])$ for a unique $\alet \in \ab$ and $k \in \{0, 1, \ldots, |\subs^n(\alet)|-1\}$. This means that the set of all $n$-cylinders forms a partition of $\seqsp$ which we denote
\begin{equation}
\Partition_n = \left\{\shift^k(\subs^n([\alet])), \alet \in \ab \text{ and } 0\le k \le |\subs^n(\alet)|-1 \right\}.
\label{eqn:supertilepartition}
\end{equation}

The fact that the partition $\Partition_{n+1}$ is a refinement of the partition of $\Partition_n$ can be seen as follows. Any two elements of the $(n+1)$-cylinder
$\shift^k(\subs^{n+1}([\alet])$ have the origin in precisely the same location of the same $(n+1)$-supertile. That immediately means that they must also share the same $n$-supertile at the origin. Thus any given $n$-cylinder either completely contains or is completely disjoint from every $(n+1)$-cylinder.

\begin{definition}\label{def:n-cylinder}
Let $\seqsp$ be given by a recognizable substitution. The refining sequence of partitions $\left(\Partition_n\right)_{n=0}^{\infty}$ is called the {\bf \em canonical partition sequence} of $\seqsp$. 
\end{definition}

The set of {\bf \em address labels} is the subset of $\ab \times \N$ given by 
\begin{equation}\label{def:domain}
\domain = \{(\alet,j) \text{ such that } 0 \le j \le |\subs(\alet)|-1\}.
\end{equation} We think about the {\bf \em type} of a supertile addressed by $(\alet, j)$ as being $\alet$ and the {\bf \em location} in that supertile as $j$.
Elements of $\domain$ are used in two crucial ways. One is to specify the $1$-cylinder $[\subs(\alet,j)] := \shift^j(\subs([\alet]))$. The other is to identify the the letter {\bf \em in the $j$th position of} $\subs(\alet)$, which is the letter at $0$ in all elements of $\shift^j(\subs([\alet]))$ and is given by $\subs(\alet)(j)$.

The position of the supertile at the origin in any $\T\in \seqsp$ is indexed by the domain $\domain$ and is, by recognizability, 
unique. The unique $\alet \in \ab$ and $j \in \{0, 1, \ldots, |\subs(\alet)|\}$ such that $\T \in \sigma^j(\subs([\alet]))$ is defined to be the $1${\bf \em -address of $\T$}  and we write $\poslist{1}(\T) =(\alet, j) \in \domain$. In this case, in somewhat of an abuse of notation/terminology, we also say that $\T(0)$ is
is in the $j$th spot of the $1$-supertile $\subs(\alet)$.  The $1${\bf \em -cylinder addressed by $\id{a} =(\alet,j)$} is defined to be the canonical partition element
\[
[\subs(\alet,j)]=\{\T \in \seqsp \text{ such that } \poslist{1}(\T) = (\alet,j)\}=
\shift^{j}(\subs([\alet])).
\]

By recognizability, the 1-supertile at the origin in $\T$ sits inside of a unique 2-supertile in a way that is also labeled by $\domain$. 
Thus for any $\T \in \seqsp$ we can define the {\bf \em 2-address of $\T$} to be $\poslist{2}(\T) = ((\alet_1, j_1),(\alet_2, j_2))$ if the 1-supertile at the origin in $\T$ is of type $\alet_1$ and is in position $j_1$, and that supertile is contained in a $2$-supertile of type $\alet_2$ at position $j_2$. We give this supertile the notation $\subs((\alet_1, j_1),(\alet_2, j_2)) $ and define the {\bf \em $2$-cylinder addressed by $((\alet_1, j_1),(\alet_2, j_2))$} to be
\[[\subs((\alet_1, j_1),(\alet_2, j_2))] = \shift^k(\subs^2([\alet_2])) \text{ where } k = \sum_{i = 0}^{j_2-1}|\subs\left(\subs(\alet_2)(i)\right)| + j_1.\] 

In general, recognizability implies that for all $n \in \N$ the position of the $(n-1)$-supertile of $\T$ with 0 in its domain must be contained in a unique $n$-supertile and this can be labelled by $\domain$.
Every $\T \in \seqsp$ contains a nested sequence of $n$-supertiles containing the origin that tells us which canonical partition elements it belongs inside.  For any given $n$, we record this as the {\bf \em $n$-address of $\T$}, denoted $\poslist{n}(\T) =((\alet_1, j_1), \ldots, (\alet_n, j_n)) \in \domain^n$.   When $\poslist{} = ((\alet_1, j_1), \ldots, (\alet_n, j_n))$, the length of $\poslist{}$ is the level of the supertile $\poslist{}$ represents and the {\bf \em type of $\poslist{}$} is $\alet_{n}$; $\poslist{}$ specifies an exact location (or domain) for $\subs^n(\alet_n)$.
The address also corresponds to an element of the $n$th partition in the canonical partition sequence that we call the {\bf \em $n$-cylinder addressed by $((\alet_1, j_1), \ldots, (\alet_n, j_n)),$} given by \[ [\subs((\alet_1, j_1), \ldots, (\alet_n, j_n))]= \shift^k(\subs^n([\alet_n])), \] where $k$ is computed as a sum of lower-level supertile lengths as it was for 2-cylinder case. Such sums appear in \cite{CanteriniSiegel2001prefix}.

For any $\T \in \seqsp$, $\T(0)$ is determined by $\poslist{1}(\T)$; that means the 0-cylinder of type $\alet$ is the union of 1-cylinders that have $\alet$ as the letter at 0. 
For each $\alet \in \ab$, the set of all positions $\alet$  appears in 1-supertiles is 
\begin{equation}
\loc{\alet}=\{(\blet,j) \in \domain \, | \, \subs(\blet)(j) = \alet\}
=\{(\blet, j) \in \domain \, | \, \alet \text{ is the } j\text{th letter of }\subs(\blet ) \}.
\label{eqn:parentset}
\end{equation}

We can write the 0-cylinder $[\alet]$ as the union of all 1-cylinders that have an $\alet$ at 0:\[[\alet] = \bigcup_{(\blet,j) \in \loc{\alet}} [\subs(\blet, j)].\]

\begin{definition}
We say $\poslist{} = ((\alet_1,j_1), (\alet_2, j_2),...)$ is an {\bf  \em address} if $(\alet_k, j_k) \in \loc{\alet_{k-1}}$ for all $1 < k \le |{\poslist{}}|,$ where we allow $|\poslist{}| = \infty$.  The set of all addresses of lengths $n$, $\infty$, or ``any'' are denoted $\addr{n},\addr\infty,$ and $\addr{}$, respectively.
\end{definition}

An address can be thought of as instructions: First, place a supertile of type $\alet_1$ so that the origin is in the $j_1$th spot. Then, slide a copy of $\subs^2(\alet_2)$ to match its $j_2$'th 1-supertile to the one in place already. Then move a copy of $\subs^3(\alet_3)$ to match its $j_3$th 2-supertile to the existing one, and so on. 

\begin{example}
For the period-doubling substitution $A \to AB, B \to AA$ we have the address set with simplified notation \[\domain = \{(A, 0), (A, 1), (B, 0), (B, 1)\} = \{A0, A1, B0, B1\}.\] Consider $\T$ such that $\poslist{3}(\T)=\{B1, A1, A0\}$. Then we know that the 1-supertile at the origin is labeled by $B1=(B,1)$ and so is of the form $\PD(B) = AA$. Thus $\T$ looks like $ \cdots A.A \cdots$ and $\T\in \shift^1(\PD([B]))$. The 2-supertile at the origin is labeled by $A1 = (A, 1)$ and so is of the form $\PD^2(A) = ABAA$ where  the supertile at 0 is in the right half of its 2-supertile. It must be that  $\T = \cdots ABA.A \cdots \in \shift^3(\PD^2([A]))$. Finally, the address $A0 = (A, 0)$ tells us that the 2-supertile is in a 3-supertile of type $A$ in the left half. We know $\PD^3(A) = ABAAABAB$ so we see that  $ \T = \cdots ABA.AABAB\cdots \in \shift^3(\PD^3([A]))$.
Figure  \ref{fig:Useaddress} illustrates the process for the 3-supertile $\PD(B1, A1, A0)$ using green unit intervals to represent $A$s and blue unit intervals to represent $B$s.
\begin{figure}[ht]
\includegraphics[width=.45\textwidth]{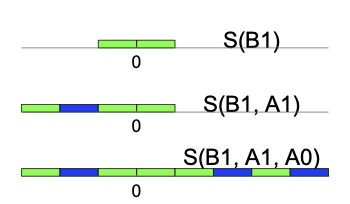}
\caption{Building a supertile from an address string.}
\label{fig:Useaddress}
\end{figure}
\end{example}

Any two addresses  $\poslist{}$ and $\poslist{}'$ that share a common prefix $\poslist{n}$ correspond to elements of $\seqsp$ that have the same $n$-supertile in the same location at the origin. Thus both $[\subs(\poslist{})]$ and $[\subs(\poslist{}')]$ are contained in $[\subs(\poslist{n})]$, the $n$-cylinder of all sequences with the corresponding supertile in the corresponding location. Each $\poslist{} \in \addr {n}$ addresses an $n$-supertile of some type $\alet_{n}$ and therefore can be contained in any 
$(n+1)$-supertile location from $\loc{\alet_{n}}$. That means
\begin{equation}
[\subs(\poslist{})] = \bigcup_{(\blet, j) \in \loc{\alet_n}} [\subs(\poslist{},(\blet,j)) ].
\label{eq:supertiletransition}
\end{equation}
This shows the precise way in which $\Partition_n$ refines $\Partition_{n-1}$ for all $n \in \N$. Since the measures of all partition elements go to zero, the sequence refines to points almost everywhere.  Note that extending an address corresponds to identifying a higher-order supertile around 0.

Addresses may fail to uniquely specify an element of $\seqsp$, but this can only happen when a sequence is the concatenation of two infinite-order supertiles, one extending off to infinity on the right and one to the left. In that case there may be at most $|\ab|$ sequences with the same address, and this is a measure-0 event. More generally we have the following lemma.

\begin{lemma}\label{lem:meas0}
If $(\seqsp, \shift)$ is a minimal subshift given by a recognizable substitution and $\mu$ is shift invariant, the subset 
\begin{equation}
\seqsp_0 = \{\T \in \seqsp \, | \, \text{each }\id{a} \in \domain \text{ appears infinitely often in } \poslist{}(\T)\}
\label{eqn:seqspcenter}
\end{equation}
has full measure.
\end{lemma}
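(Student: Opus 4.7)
My plan is to reduce to a Borel--Cantelli-style argument at the level of individual addresses. Since $\domain$ is finite,
\[ \seqsp_0 = \bigcap_{\id{a} \in \domain} A_{\id{a}}, \qquad A_{\id{a}} := \limsup_{k \to \infty} E_{\id{a},k}, \quad E_{\id{a},k} := \{\T : \id{a}_k(\T) = \id{a}\}, \]
so it suffices to show $\mu(A_{\id{a}}) = 1$ for each $\id{a} \in \domain$.

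First I would verify that each $A_{\id{a}}$ is shift invariant. The upcoming identity $\poslist{}(\shift \T) = \vvv(\poslist{}(\T))$ of Lemma~\ref{lem:shiftvershikcommute} says that shifting only alters finitely many initial entries of $\poslist{}(\T)$, so $\poslist{}(\T)$ and $\poslist{}(\shift\T)$ have identical tails and $\id{a}$ appears at infinitely many levels of one iff it does in the other. I would then assume WLOG that $\mu$ is ergodic --- passing to ergodic components otherwise, which is legitimate because $A_{\id{a}}$ is shift invariant --- reducing the problem to showing $\mu(A_{\id{a}}) > 0$.

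Next I would compute $\mu(E_{\id{a},k})$ directly. Writing $\id{a} = (\blet,j)$ and letting $\clet$ be the $j$-th letter of $\subs(\blet)$, recognizability identifies $E_{\id{a},k}$ as the disjoint union of $|\subs^{k-1}(\clet)|$ unit shifts of $\subs^k([\blet])$, one for each position inside the $j$-th $(k-1)$-subtile of $\subs^k(\blet)$. Shift invariance of $\mu$ then gives
\[ \mu(E_{\id{a},k}) \;=\; |\subs^{k-1}(\clet)| \cdot \mu(\subs^k([\blet])) \;=\; \frac{|\subs^{k-1}(\clet)|\, \vec{r}(\blet)}{\lambda^{k}}, \]
which, by the Perron--Frobenius asymptotic $|\subs^{k-1}(\clet)| \sim c_\clet \lambda^{k-1}$, tends to a strictly positive constant. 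Reverse Fatou for sets in the probability space $(\seqsp,\mu)$ then yields $\mu(A_{\id{a}}) \geq \limsup_k \mu(E_{\id{a},k}) > 0$, and ergodicity upgrades this to $\mu(A_{\id{a}}) = 1$, completing the argument.

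The main obstacle is maintaining positivity of the limit in step two outside the primitive setting: without primitivity neither the Perron--Frobenius asymptotic for $|\subs^{k-1}(\clet)|$ nor the nonvanishing of $\vec{r}(\blet)$ is automatic. The standing minimality hypothesis together with the paper's declared avoidance of letters of zero frequency keeps these quantities positive, but a careful accounting of possible ``dead'' subtile types and a slightly weaker uniform lower bound of the form $|\subs^{k-1}(\clet)|\cdot \mu(\subs^{k}[\blet]) \geq c > 0$ is what the proof really hinges on in full generality.
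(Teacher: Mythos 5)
The paper gives no argument for this lemma (it is stated ``whose proof we omit''), so there is nothing to compare your proposal against; I can only assess it on its own terms. In the primitive setting your argument is correct and essentially complete: the reduction $\seqsp_0=\bigcap_{\id{a}\in\domain}A_{\id{a}}$ over the finite set $\domain$, the identification of $E_{\id{a},k}$ as $|\subs^{k-1}(\clet)|$ disjoint unit shifts of $\subs^k([\blet])$ giving $\mu(E_{\id{a},k})=|\subs^{k-1}(\clet)|\,\vec{r}(\blet)/\lambda^{k}$, the reverse-Fatou bound $\mu(\limsup_k E_{\id{a},k})\ge\limsup_k\mu(E_{\id{a},k})>0$, and the upgrade to full measure by invariance plus ergodicity all go through. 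Two points of hygiene: the tail-invariance of $A_{\id{a}}$ under $\shift$ holds only off the set where $\enn(\T)=\infty$, which has measure at most $\lambda^{-k}$ for every $k$ and hence measure zero, so $A_{\id{a}}$ is invariant mod $\mu$ (enough for your purposes, but worth saying); and the ergodic-decomposition step sits slightly awkwardly before a computation that uses the eigenvector form $\mu(\subs^k([\blet]))=\vec{r}(\blet)/\lambda^{k}$ of the measure --- though for a primitive recognizable substitution any invariant measure satisfies this (the consistency relations force the vectors $\bigl(\mu(\subs^k([\blet]))\bigr)_\blet$ onto the Perron ray) and the subshift is in fact uniquely ergodic, so both issues evaporate there.

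The obstruction you flag in your final paragraph is not a removable technicality: it is exactly where the statement itself, under minimality and shift-invariance alone, breaks down. For the paper's own Chacon example $\chacon(0)=0\,0\,1\,0$, $\chacon(1)=1$, the label $(0,3)$ forces the $(k-1)$-supertile at the origin to be the singleton $\subs^{k-1}(1)=1$, so $|\subs^{k-1}(1)|\cdot\mu(\subs^{k}([0]))=\vec{r}(0)/3^{k}\to 0$ and your lower bound genuinely fails; worse, the transition constraint $\id{a}_{k}\in\loc{\pi_\ab(\id{a}_{k-1})}$ shows that $(0,3)$ can occur at most once in any address (it requires the type below it to be $1$, while every type above it is $0$ forever after), so $\seqsp_0=\emptyset$ for that subshift. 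So your proof is right exactly where the lemma is right, and the uniform lower bound $|\subs^{k-1}(\clet)|\cdot\mu(\subs^{k}([\blet]))\ge c>0$ that you isolate is a genuinely missing hypothesis (primitivity, or at least that every letter's supertile lengths grow like $\lambda^{k}$), not something a sharper argument could supply.
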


The proof of this folk lemma is relatively technical, so we sketch it here. Note that both $\seqsp_0$ and its complement are shift invariant. To show the complement of $\seqsp_0$ must have measure 0, notice that it is contained in the set $K$ of all $\T \in \seqsp$ for which $\poslist{}(\T)$ has finitely many  $(\alet, 0)$ for a fixed $\alet \in \ab$. Showing that $K$ has measure 0 requires a series of estimates.

\section{Canonical isomorphisms}
\label{sec:mtcong}

In this section we use our address scheme to define a measurable conjugacy $\mtcong: (\seqsp, \mu) \to ([0,1],  m)$, illustrating the process with the period-doubling substitution. Readers may well be reminded of many constructions using similar ideas, for instance \cite{SceneryFlow, DownarowiczMaass2008}, Anosov flows, Veech rectangles, and a variety of tower-related constructions. We also define the level-$n$ flow view and the full flow view.

The {\bf standard set of assumptions} to be used are as follows. The substitution $\subs$ should be recognizable and its subshift should be {\bf \em minimal} in the sense that every orbit is dense. The measure $\mu$ is assumed to be a shift-invariant Borel probability measure. With these assumptions, letters with zero frequency and other technical difficulties are avoided.

Here is an overview of how we will define $\mtcong$. We construct a partition sequence $(\PartitionI_n)$ of $[0,1)$ to match up with the
canonical partition sequence $(\Partition_n)$ of $\seqsp$.  A partition element $\II(\poslist{n})$ in $[0,1)$ of length $\mu([\subs(\poslist{n})])$ is assigned to each $n$-cylinder in a way that preserves inclusion. The partition sequence in $\seqsp$ refines to points so the map $\mtconj$ can be thought of intuitively as infinite intersections, but we define it using limits of left endpoints given by partial sums $\mtconj_n: \addr n\to [0,1)$.
We will define the $n$-th level flow view to display the supertile $\subs(\poslist{n})$ horizontally at $y=\mtcong_n(\poslist{n})$ as colored intervals with vertical thickness $\mu([\subs(\poslist{n})])$.

To start, we need to choose an initial partition of $[0,1)$. Since $\mu$ is a probability measure we know $\sum_{\alet \in \ab} \mu([\alet]) = 1$, so for each $\alet \in \ab$, choose a left endpoint $\Lft_0(\alet)\in [0,1)$ such that the intervals $\II(\alet):=[\Lft_0(\alet), \Lft_0(\alet)+ \mu([\alet]))$ cover $[0,1)$.  The initial partition is $\PartitionI_0=\{\II(\alet), \alet \in \ab\}$. 

Recall from equation \eqref{eq:evectmeasure} that 
for $(\blet, j) \in \domain{}$, $\mu([\subs(\blet,j)]) = \mu(\subs([\blet]))=\mu([\blet])/\lambda,$ where $\lambda$ is the Perron-Frobenius eigenvalue of the substitution matrix. We have
\begin{equation}
\mu([\alet]) = \sum_{(\blet, j) \in \loc{\alet}} \mu([\subs(\blet)])=  \sum_{(\blet, j) \in \loc{\alet}}\mu([\blet])/\lambda.
\label{eqn:transitionconsistent}
\end{equation}
That means for each $\alet \in \ab$ we can partition $[0, \mu([\alet]))$ into intervals of these lengths. For each $\alet \in \ab$ let the `left endpoint' function $\lft: \loc{\alet} \to [0, \mu([\alet]))$ record the left endpoints of this partition. We have
\begin{equation}
\left[0,\mu([\alet])\right) = \bigcup_{(\blet, j)\in \loc{\alet}} \left[\lft(\blet, j), \, \lft(\blet, j) +  \frac{\mu([\blet])}{\lambda}\right).
\label{eqn:le(aj)}
\end{equation}

The orders of the subintervals chosen for $\lft$ are conveniently expressed as a fixed {\bf \em dual substitution} $\subsdual$, defined as a substitution on $\ab$ whose transition matrix is the transpose of that of $\subs$. Note that there may be several dual substitutions to choose from and there does not seem to be a natural choice. 
The row corresponding to $\alet$ in the substitution matrix of $M$ is the column for $\alet$ in the matrix of $\subsdual$, so $\subsdual(\alet)$ contains the composition of letters seen in $\loc{\alet}$. 
To move the partition of $[0,\mu([\alet]))$ into the correct location in $[0,1)$ we add $\Lft_0(\alet)$. Let $(\blet, j) \in \domain$ with $(\blet, j) \in \loc{\alet}$. We define
\[\Lft_1(\blet, j) = \Lft_0(\alet) + \lft(\blet, j) \, \text{ and }  \, \II(\blet, j) = \left[\Lft_1(\blet, j),  \Lft_1(\blet, j) + \mu([\subs(\blet, j)])\right)\]
So $ \PartitionI_1 = \left\{\II(\blet, j) \, | \, (\blet, j) \in \domain\right\}$ refines partition $\PartitionI_0$ and $m(\II(\blet, j)) = \mu([\subs(\blet, j)]) = \mu([\blet])/\lambda$ for all $(\blet, j) \in \domain$.

\begin{example} Consider the period-doubling substitution $A \mapsto AB, B \mapsto AA$. We know $\mu([A]) = 2/3$ and $\mu([B]) = 1/3$, so we define
$\Lft_0(A) = 0, \, \Lft_0(B) = 2/3$. 
Any element $\T \in \seqsp$ with $\T(0) = A$ will ultimately be sent somewhere in the interval $[0, 2/3]$. Likewise if $\T(0) = B$ then $ \mtcong(\T) \in [2/3, 1]$. 

We have \[\loc{A} = \{(A, 0), (B, 0), (B, 1)\} =\{A0, B0, B1\}\text{ and }\loc{B} =\{(A, 1)\}= \{A1\}.\] and our choice for  the dual substitution is $\subsdual: A \mapsto ABB $ and  $B \mapsto A.$ 
Figure \ref{fig:phipartition} shows the choice of $\lft$ that makes.
\begin{figure}[ht]
\includegraphics[width=.35\textwidth]{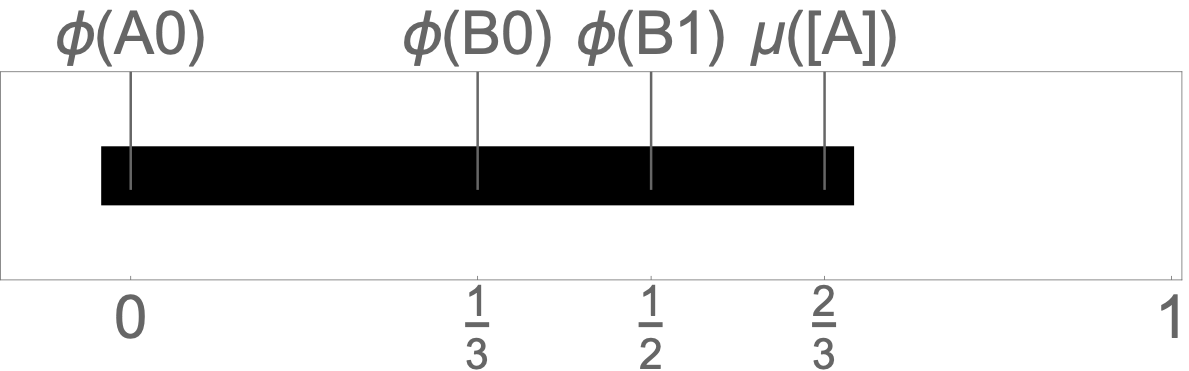} \hskip 1em \includegraphics[width=.35\textwidth]{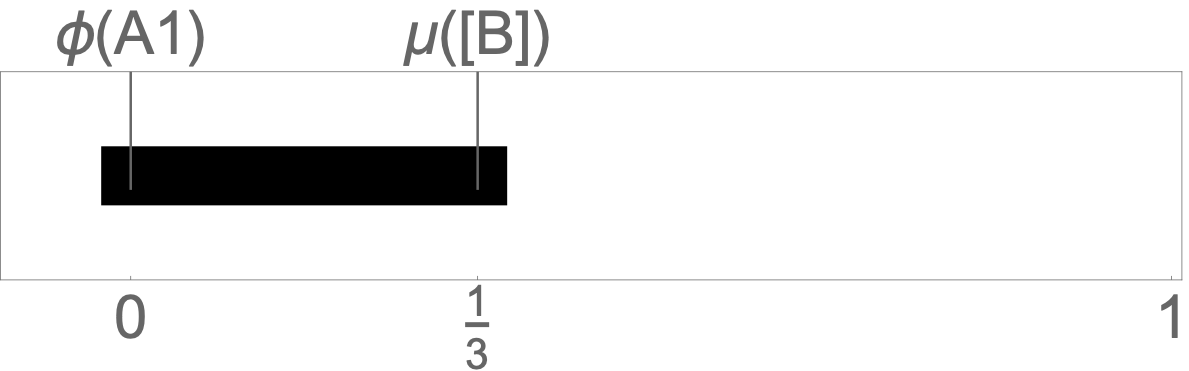}
\caption{Our choice of initial partition for $\PD$.}
\label{fig:phipartition}
\end{figure}

For $\lft$ and $\Lft_1$ that gives:
\[\Lft_1(A0) = 0 + 0, \quad \Lft_1(B0)= 0+ 1/3, \quad \Lft_1(B1) = 0 + 1/2, \text{ and } \Lft_1(A1) = 2/3 + 0.
\]
\end{example}

\begin{remark}
Note that a natural length vector of $\subsdual$ is the vector governing the measure $\mu$ of $\subs$ as in equation \eqref{eq:evectmeasure}, giving geometric significance to the partitions as self-similar sets.
\end{remark}

Flow views are drawn in $\R \times [0,1]$, with sequences depicted in the horizontal direction as colored intervals. We build the {\bf \em level-$1$ flow view} as follows. Render each 1-supertile as the appropriate string of colored horizonal intervals thickened vertically by the measure of its cylinder set. This rendering is placed in the flow view so that it intersects $[0,1]$ at the interval of the same address. 
This represents the $1$-cylinder set of all tilings that have that supertile at the origin in that position, and its Lebesgue measure matches its $\mu$-measure. The period doubling level-$1$ flow view appears on the right of figure \ref{fig:2partion}. On the left is an  {\bf \em address diagram} that can be used to track the locations of the $n$th level partition elements in $[0,1]$.
\begin{figure}[ht]
\includegraphics[height=.175\textheight]{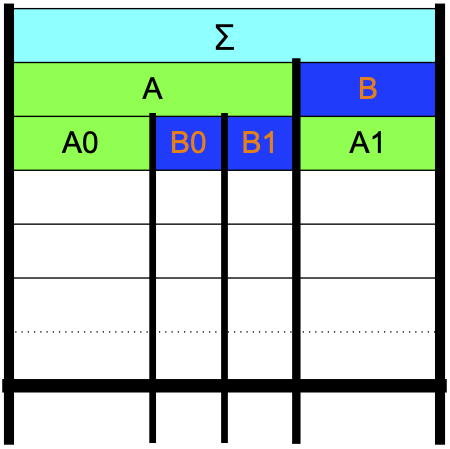}\hskip 3em
\includegraphics[height=.175\textheight]{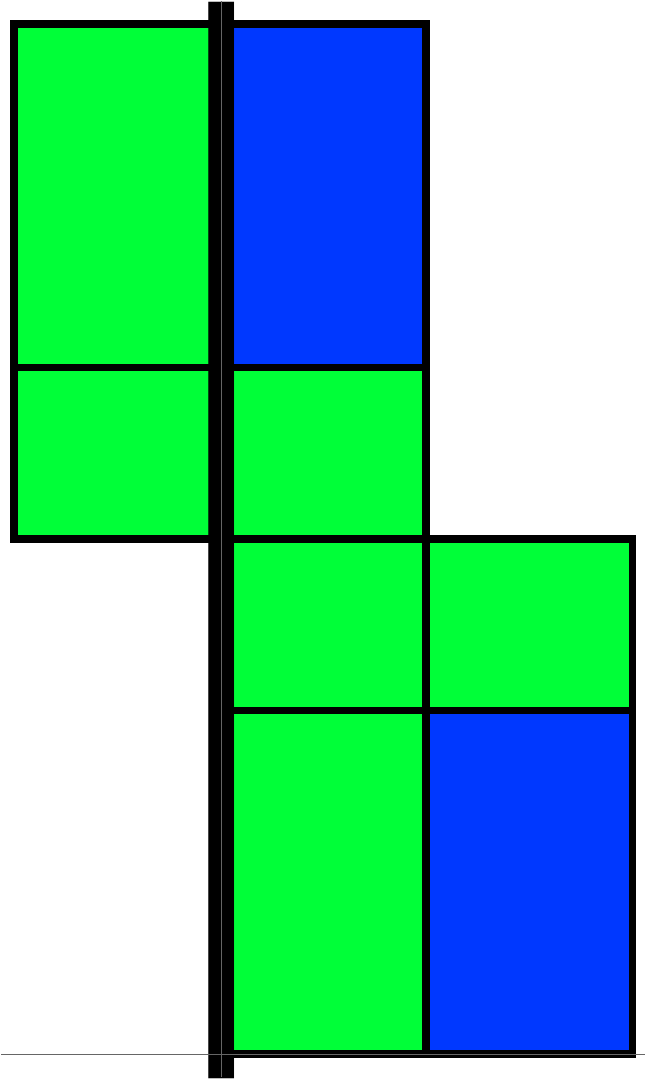}
\caption{The the level-1 address diagram and flow view for $\PD$.}
\label{fig:2partion}
\end{figure}

This process is used to refine intervals at level $(n-1)$ to level $n$ in general as follows. The refinement $\PartitionI_2$ is given by sets of the form $\II((\alet_1,j_1), (\alet_2, j_2))$, where $((\alet_1,j_1), (\alet_2, j_2)) \in \addr 2$.
We construct our refinement so that each $\II(\alet_1,j_1) \in \PartitionI_1$ is partitioned by the partition elements $\II((\alet_1,j_1), (\alet_2, j_2))$ for which $(\alet_2, j_2) \in\loc{\alet_1},$ placed in the order given by $\subsdual(\alet_1)$. 

Because $\mu$ is invariant, and because the 2-cylinder set $[\subs((\alet_1,j_1), (\alet_2, j_2))]$ is a  shift of the 2-cylinder $\subs^2([\alet_2])$ we have
\[
\mu([\subs(\alet_1,j_1)]) = \sum_{(\alet_2, j_2) \in\loc{\alet_1}} \mu([\subs((\alet_1,j_1), (\alet_2, j_2)) ])= \sum_{(\alet_2, j_2) \in\loc{\alet_1}} \mu([\alet_2])/\lambda^2.
\]

The interval $[0, \mu([\subs(\alet_1)]))$ is scaled by $1/\lambda$ from $ [0, \mu([\alet_1]))$ and so we use $\lft(\alet_2, j_2)/\lambda$ to partition it. This preserves the order given by $\subsdual$.  A scaled-down copy of the partition of $\II(\alet_1)$ is placed on every $\II(\alet_1, j_1)$. That is, take $\Lft_1(\subs(\alet_1,j_1))$ and add on $\lft(\alet_2, j_2)/\lambda$:
\[\Lft_2((\alet_1,j_1), (\alet_2, j_2)) = \Lft_1(\alet_1,j_1) + \lft(\alet_2, j_2) /\lambda = \Lft_0(\subs(\alet_1)(j_1)) + \lft(\alet_1, j_1) + \lft(\alet_2, j_2) /\lambda
\] 
We define $\II((\alet_1,j_1), (\alet_2, j_2))=[\Lft_2((\alet_1,j_1), (\alet_2, j_2)), \Lft_2((\alet_1,j_1), (\alet_2, j_2)) + \mu([\alet_2])/\lambda^2)$. 

To construct the {\bf \em level-2 flow view}, render each $2$-supertile  $ \subs((\alet_1,j_1), (\alet_2, j_2))$ horizontally as colored unit intervals with vertical thickness $\mu([\alet_2])/\lambda^2$, placed at $y=\Lft_2((\alet_1,j_1), (\alet_2, j_2))$.
See figure \ref{fig:2levelpartion} for the level-$2$ address diagram and flow view of the period-doubling substitution. Note that the level-$2$ flow view contains and refines the level-$1$ flow view.

\begin{figure}[ht]
\includegraphics[height=0.175\textheight]{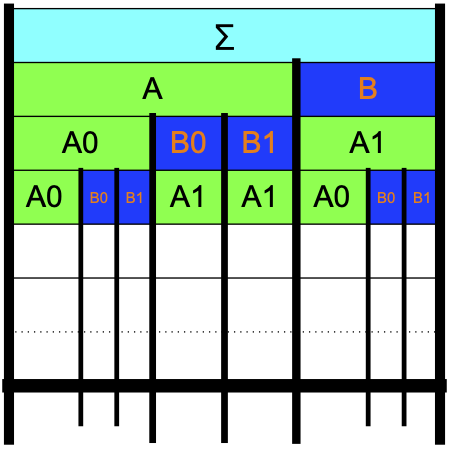}\hskip 3em
\includegraphics[height=0.175\textheight]{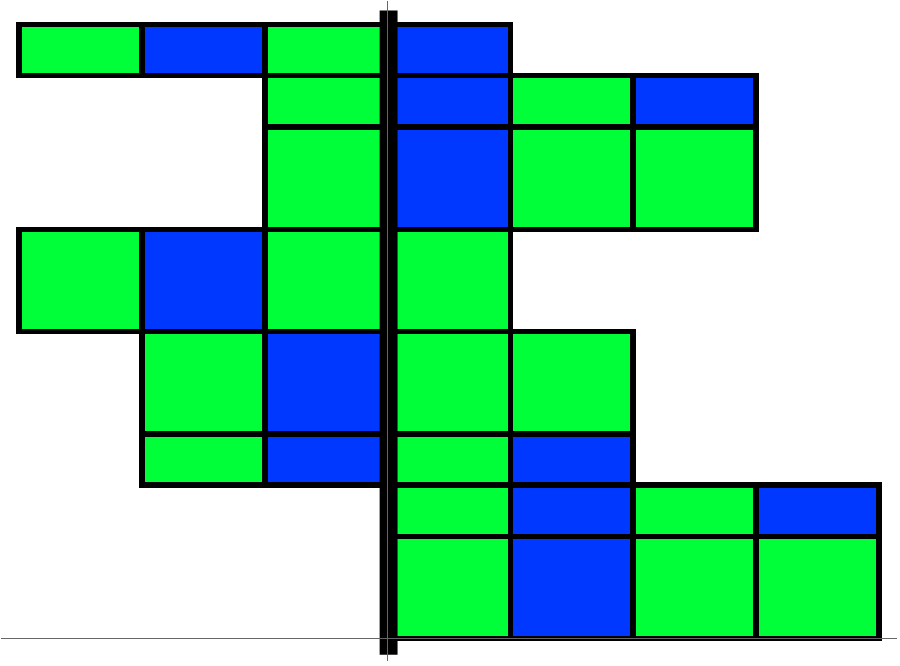}
\caption{The level-2 address diagram and flow view for $\PD$.}
\label{fig:2levelpartion}
\end{figure}

From here the refinements follow the same pattern and we can define the function $\Lft_n: \addr n \to [0,1)$ recursively or directly. 
If $\poslist{} = ((\alet_1,j_1),\ldots, (\alet_n, j_n))$, then
\begin{equation}
\Lft_n(\poslist{}) = \Lft_{n-1}( (\alet_1,j_1),\ldots, (\alet_{n-1}, j_{n-1})) + \frac{\lft(\alet_n,j_n)}{\lambda^{n-1}} =  \Lft_0(\subs(\alet_1)(j_1))\ +\sum_{k = 1}^n \frac{\lft(\alet_k, j_k)}{\lambda^{k-1}}.
\label{eqn:LE(p)}
\end{equation}
The interval corresponding to $\poslist{}$ is thus \[
 \II(\poslist{}) = [\Lft_n(\poslist{}), \Lft_n(\poslist{}) + \mu([\subs(\poslist{})])) = \left[\Lft_n(\poslist{}), \Lft_n(\poslist{}) + \frac{\mu([\alet_n])}{\lambda^n}\right)
,\] making the Lebesgue measure of $\II(\poslist{}) $ is equal to $\mu([\subs(\poslist{})])$.
We define the {\bf \em canonical partition sequence of $[0,1)$ given by $\subsdual$} to be $\left(\PartitionI_n\right)_{n = 0}^\infty,$ where
\[
\PartitionI_n= \{\II(\poslist{})  \text{ such that } \poslist{}\in \addr{n}\}.
\]

\begin{definition}\label{def:mtconj}
The {\bf \em canonical isomorphism given by $\subsdual$} is the map $\mtcong: \seqsp \to [0,1]$ given by \begin{equation}
\mtcong(\T) = \lim_{n \to \infty} \mtcong_n(\poslist n(\T))= \Lft_0(\subs(\alet_1)(j_1))+\sum_{k = 1}^\infty \frac{\lft(\alet_k, j_k)}{\lambda^{k-1}}, \text{ where } \poslist{}(\T) =  ((\alet_i, j_i))_{i = 1}^\infty.
\end{equation}
The {\bf \em (full) flow view given by $\subsdual$} is the graph of $\mtconj$, with each $\T \in \seqsp$ rendered as colored unit intervals drawn at $y=\mtconj(\T)$. 
\end{definition}

A high-level approximation of the full flow view of $\PD$ is shown on the left of figure \ref{fig:pdfinalstuff}.

\begin{proposition}
Let $\subs$ be a recognizable substitution whose subshift $(\seqsp, \mu)$ is minimal, and let $\mtcong: (\seqsp, \mu) \to ([0, 1], m)$ be a canonical isomorphism. Then $\mtcong$ is measure preserving, well-defined and uniformly continuous everywhere,  bijective almost everywhere, and at most $2|\ab|$:1.
 \label{Pf:mtcong}
\end{proposition}

\begin{proof}
We know  $\mtconj$ is well-defined because every tiling has a unique address and each infinite series converges by definition. 

For uniform continuity, given $\epsilon>0$, suppose  $\T$ and $\T'$ have the same address out to $N$, where $\epsilon > \lambda^{-N}$.  Then $\mtconj(\T)$ and $\mtconj(\T')$ map into the same element of $\PartitionI_N$, which has length smaller than $\lambda^{-N}$, and thus $|\mtconj(\T)-\mtconj(\T')|< \epsilon$.
There is a common recognizability radius $R_N$  to determine the $N$th supertile at the origin for any element of $\seqsp$. To ensure $\T$ and $\T'$ agree on that supertile we need only that $d(\T, \T') < 1/R_N$. This proves uniform continuity. Moreover, since $\seqsp$ is compact its image must be also, so $1 \in \mtcong(\seqsp)$. Since the partitions refine to points, $\mtcong$ is a surjection.

Now suppose $\mtconj(\T) = \mtconj(\T')$. One way this can happen is if $\poslist{}(\T) = \poslist{}(\T')$, which can happen if both $\T$ and $\T'$ are made of two infinite-order supertiles with the origin in precisely the same location of one of them. In this case there are at most $|\ab|$ possible choices for the half of $\T'$ that differs from $\T$.

If $\poslist{}(\T) \neq \poslist{}(\T')$ but $\mtconj(\T) = \mtcong(\T')$, let $n$ be the smallest integer for which $\poslist n(\T) \neq \poslist n(\T')$. All of the partial sums are nondecreasing and so if $\poslist{n}(\T) \neq \poslist{n}(\T')$ then WLOG we may assume $\Lft_n(\poslist{n}(\T))<\Lft_n(\poslist{n}({\T'}))$. The remaining terms in the series for $\poslist{}(\T')$ must then be 0, so $\mtconj(\T') = \Lft(\poslist{n}(\T'))$. This set of {\em left endpoints} in $[0,1]$ comes up often enough to name it:
\begin{equation}
\label{eqn:lends}
\lends = \{\Lft_n(\poslist{}), \, \poslist{} \in \addr{n} \text{ and } n \in \N\}.
\end{equation}
The remaining terms that comprise $\mtconj(\T)$ must be the maximum possible within $\II(\poslist{n-1}(\T))$, and this is also unique. So if $x = \mtcong(\T) = \mtcong(\T')$,
and $x \in \lends$, then $\mtcong^{-1}(x)$ contains $\T$, $\T'$, and any elements of $\seqsp$ with the same address as $\T$ or as $\T'$, for a total of $2|\ab|$ possible preimages.
The tails of the addresses of both $\T$ and $\T'$ use only a subset of the full label set $\domain$. By lemma \ref{lem:meas0}, this is a null set for $\mu$. 
\end{proof}

By construction Lebesgue measure is the push-forward of $\mu$ under 
$\mtcong$ and so
\begin{corollary}
For all integrable $f: [0,1] \to \C$, $\int_0^1 f(x) dm = \int_\seqsp f(\mtcong(\T)) d \mu$.
\label{lem:cov}
\end{corollary}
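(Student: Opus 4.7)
The plan is to establish that Lebesgue measure $m$ on $[0,1]$ is exactly the push-forward $\mtcong_* \mu$, and then invoke the standard change-of-variables formula for push-forward measures. Once $m = \mtcong_* \mu$ is known, the identity $\int_0^1 f \, dm = \int_\seqsp f \circ \mtcong \, d\mu$ is a textbook consequence, first for indicator functions of Borel sets, then for simple, nonnegative measurable, and finally integrable complex-valued $f$ by the usual approximation arguments.

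To prove $m = \mtcong_* \mu$, I would first check that $\mtcong$ is Borel measurable: proposition \ref{Pf:mtcong} shows it is uniformly continuous, and continuous maps are Borel measurable, so $\mtcong_*\mu$ is a well-defined Borel probability measure on $[0,1]$. Next I would verify the two measures agree on a generating semi-algebra. By construction, for each $n \in \N$ and each address $\poslist{} \in \addr{n}$ the interval $\II(\poslist{})$ has Lebesgue measure $\mu([\subs(\poslist{})])$, and the preimage $\mtcong^{-1}(\II(\poslist{}))$ agrees with the $n$-cylinder $[\subs(\poslist{})]$ except on the set of address endpoints in $\lends$, which by proposition \ref{Pf:mtcong} and lemma \ref{lem:meas0} carries $\mu$-measure zero. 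Hence
\[
\mtcong_* \mu \bigl(\II(\poslist{})\bigr) = \mu\bigl([\subs(\poslist{})]\bigr) = m\bigl(\II(\poslist{})\bigr).
\]
The collection $\bigcup_n \PartitionI_n$ of all such intervals, together with their finite unions, forms a $\pi$-system whose sets have diameters $\le \lambda^{-n} \to 0$ and therefore generate the Borel $\sigma$-algebra of $[0,1]$. A standard $\pi$-$\lambda$ (Dynkin) or monotone class argument then yields $\mtcong_*\mu = m$ on all Borel sets.

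With the measures identified, the change-of-variables formula follows: for $f = \mathbf{1}_E$ a Borel indicator we have
\[
\int_0^1 \mathbf{1}_E \, dm = m(E) = \mu(\mtcong^{-1}(E)) = \int_\seqsp \mathbf{1}_E \circ \mtcong \, d\mu,
\]
and linearity plus monotone convergence extend this first to nonnegative measurable $f$, then to integrable complex-valued $f$ by splitting into real and imaginary, positive and negative parts.

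The only step requiring any care is verifying that $\mtcong^{-1}(\II(\poslist{}))$ differs from the cylinder $[\subs(\poslist{})]$ only by a $\mu$-null set, since $\mtcong$ is not injective on $\lends$; but proposition \ref{Pf:mtcong} already pins the failure of injectivity to a countable set of endpoints whose $\mu$-preimages lie in the complement of $\seqsp_0$, and lemma \ref{lem:meas0} guarantees this has measure zero. After that, the argument is entirely routine measure theory, and no essentially new ideas are needed beyond what proposition \ref{Pf:mtcong} has already supplied.
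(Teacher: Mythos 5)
Your proposal is correct and follows essentially the same route as the paper, which disposes of this corollary in one line by asserting that ``by construction Lebesgue measure is the push-forward of $\mu$ under $\mtcong$'' and invoking the standard change-of-variables formula. Your writeup simply supplies the details the paper leaves implicit --- agreement of $\mtcong_*\mu$ and $m$ on the generating $\pi$-system of intervals $\II(\poslist{})$ modulo the $\mu$-null set tied to $\lends$, then the usual $\pi$-$\lambda$ and approximation arguments --- and these details are accurate.
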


At points where $\mtcong$ is one-to-one its inverse is continuous in the following sense. 
\begin{corollary}
Let $x_0 \in [0,1)\setminus\lends$.
For every $\delta > 0$ there exists an $\epsilon'>0$ such that if $|x - x_0| < \epsilon'$, then $d(\mtconj^{-1}(x),\mtcong^{-1}(x_0))< \delta$ for any element of $\mtconj^{-1}(x)$.
\label{cor:inv.one.to.one}
\end{corollary}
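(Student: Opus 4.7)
The plan is to choose $N$ so that two conditions hold simultaneously: the $N$-supertile of $\T_0 := \mtcong^{-1}(x_0)$ containing the origin extends far enough on both sides of $0$ that any $\T$ sharing this supertile automatically satisfies $d(\T, \T_0) < \delta$, and $x_0$ sits strictly in the interior of $\II(\poslist{N}(\T_0))$. With such an $N$ in hand, I will set $\epsilon'$ equal to the smaller of the two distances from $x_0$ to the endpoints of $\II(\poslist{N}(\T_0))$; any $x$ in the $\epsilon'$-ball about $x_0$ then lies inside $\II(\poslist{N}(\T_0))$, forcing each of its (at most two) preimages $\T$ to share the $N$-address of $\T_0$. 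Thus $\T$ and $\T_0$ sit inside the same $N$-supertile at the origin and agree on all positions it covers.

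Let $j_N$ denote the position of $0$ within the $N$-supertile of $\T_0$ at the origin and $L_N$ its length, so the supertile covers positions $\{1-j_N, \ldots, L_N-j_N\}$. Fix $R$ with $\exp(-R) < \delta$. The first condition reduces to finding $N$ with $j_N \ge R$ and $L_N - j_N \ge R$, since the supertile then contains $\{j : |j| < R\}$. Such an $N$ exists because both $j_N$ and $L_N - j_N$ are monotone nondecreasing (immediate from the nested supertile structure) and unbounded. The unboundedness is exactly the contrapositive of the half-line observation in the remark following Proposition~\ref{Pf:mtcong}: if either $\{j_N\}$ or $\{L_N - j_N\}$ remained bounded, then the nested $N$-supertiles at the origin of $\T_0$ would cover only a half-line, and every such sequence has $\mtcong$-image in $\lends$, contradicting $x_0 \notin \lends$.

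The second condition is immediate from the hypothesis. The left endpoint of $\II(\poslist{N}(\T_0))$ is $\Lft_N(\poslist{N}(\T_0)) \in \lends$ and hence differs from $x_0$. The right endpoint is either the left endpoint of an adjacent $N$-level interval (also in $\lends$) or equals $1$, which exceeds $x_0 \in [0,1)$; either way it differs from $x_0$, so $\epsilon'$ is strictly positive. Then $|x - x_0| < \epsilon'$ gives $x \in \II(\poslist{N}(\T_0))$, and by construction of the partition every $\T \in \mtcong^{-1}(x)$ lies in the $N$-cylinder $[\subs(\poslist{N}(\T_0))]$, so $\poslist{N}(\T) = \poslist{N}(\T_0)$. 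Therefore $\T$ agrees with $\T_0$ on $\{j : |j| < R\}$, giving $N(\T, \T_0) \ge R$ and $d(\T, \T_0) \le \exp(-R) < \delta$.

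The main obstacle is the half-line implication relied on in the second paragraph: the remark asserts it but does not supply a proof, and warns that the relationship between $\seqsp_0$ and $\lends$ is not fully understood. The cleanest route is a direct telescoping computation. In the right half-line case, $\pi_\N(\id{a}_n) = 1$ for all $n > N_0$ forces $\alet_{n-1}$ to equal the first letter of $\subs(\alet_n)$ at every level past $N_0$, rigidly determining the types in the tail. Repeatedly applying the identity $\mu([\alet]) = \sum_{\id{b} \in \loc{\alet}} \mu([\pi_\ab(\id{b})])/\lambda$ from \eqref{eqn:transitionconsistent} along this determined tail, the series $\sum_{n > N_0} \lft(\id{a}_n)/\lambda^{n-1}$ collapses to a value equal to one of the sub-interval boundaries inside $\II(\poslist{N_0}(\T_0))$; in particular $\mtcong(\T_0)$ is a left endpoint of some interval at finite level, and hence lies in $\lends$. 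The symmetric computation disposes of the left half-line case.
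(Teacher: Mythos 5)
Your argument follows the paper's proof almost step for step: uniqueness of the preimage $\T_0$, the claim that its nested supertiles at the origin exhaust $\Z$, a level $N$ at which that supertile covers a ball of radius $R$ with $e^{-R}<\delta$, and an $\epsilon'$ keeping $B_{\epsilon'}(x_0)$ inside $\II(\poslist{N}(\T_0))$ so that preimages of nearby points share the $N$-address. You are in fact more careful than the paper in two places: you justify $\epsilon'>0$ by noting both endpoints of $\II(\poslist{N}(\T_0))$ lie in $\lends\cup\{1\}$, and you correctly identify the load-bearing step --- ``half-line supertile structure at the origin forces the $\mtcong$-image into $\lends$'' --- which the paper asserts without proof (and whose status the remark after Proposition \ref{Pf:mtcong} leaves open).

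The gap is in your proposed repair of that step. The offset $\lft(\id{a}_n)$ is determined by where $\id{a}_n$ sits in the chosen dual ordering of $\loc{\alet_{n-1}}$, not by its position $\pi_\N(\id{a}_n)$ inside its supertile, so $\pi_\N(\id{a}_n)=1$ for all $n>N_0$ does not make the tail of the series vanish, and the identity \eqref{eqn:transitionconsistent} says nothing about the offsets. Concretely, for period-doubling one may legitimately order $\loc{A}=\{A1,B1,B2\}$ as $B1,A1,B2$, giving $\lft(A1)=1/6\neq 0$; the eventually-position-minimal tail $(\dots,A1,A1,\dots)$ is admissible and yields strictly increasing left endpoints $\Lft_n(\poslist{n}(\T_0))$. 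Their limit $x_0$ lies in the interior of every $\II(\poslist{n}(\T_0))$, and since any finite-level partition interval containing $x_0$ must (by nestedness of the $\PartitionI_m$) contain one of these and hence have left endpoint strictly below $x_0$, this $x_0$ is an accumulation point of $\lends$ but not a member of it. So the series does not ``collapse to a boundary at finite level,'' and your proof of the half-line implication fails. (To be fair, this also exposes the paper's own assertion as requiring either a normalization of $\subsdual$ --- e.g.\ that position-extremal labels be placed dual-extremally, which redefines $\lends$ to absorb the half-line addresses --- or a genuinely different argument; but the telescoping computation as you state it is not that argument.)
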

\begin{proof}
Since $x_0 \notin \lends$ there is a unique $\T_0$ with $\mtconj(\T_0) = x_0$. The address of $\T_0$ has infinitely many nonminimal or nonmaximal elements, and so there is an $N$ such that the $N$-supertile in $\T_0$ contains $-M$ and $M$ in its domain for some $M > 1/\delta$. Fix such an $N$ and choose $\epsilon'>0$ such that $(x_0-\epsilon',x_0+\epsilon') \subset \II(\poslist{N}(\T_0))$. If $\T$ is such that $\mtconj(\T) \in B_{\epsilon'}(x_0)$, then $\poslist{N}(\T) = \poslist{N}(\T_0)$. This means that $\T$ and $\T'$ have the same $N$-supertile at the origin, and thus $d(\T, \T') < \delta$.
\end{proof}

\begin{example}
Figure \ref{FlowViewHeit} shows the flow view for the substitution 
$A \mapsto ACB, B \mapsto BCA, C\mapsto CAC$, with $A, B,$ and $C$ represented by green, blue, and red intervals respectively. This substitution is constant-length with expansion $\lambda = 3$ and has a period-2 substructure known as a `height' of 2 \cite{Host}. 
\begin{figure}[ht]
\includegraphics[width=5in]{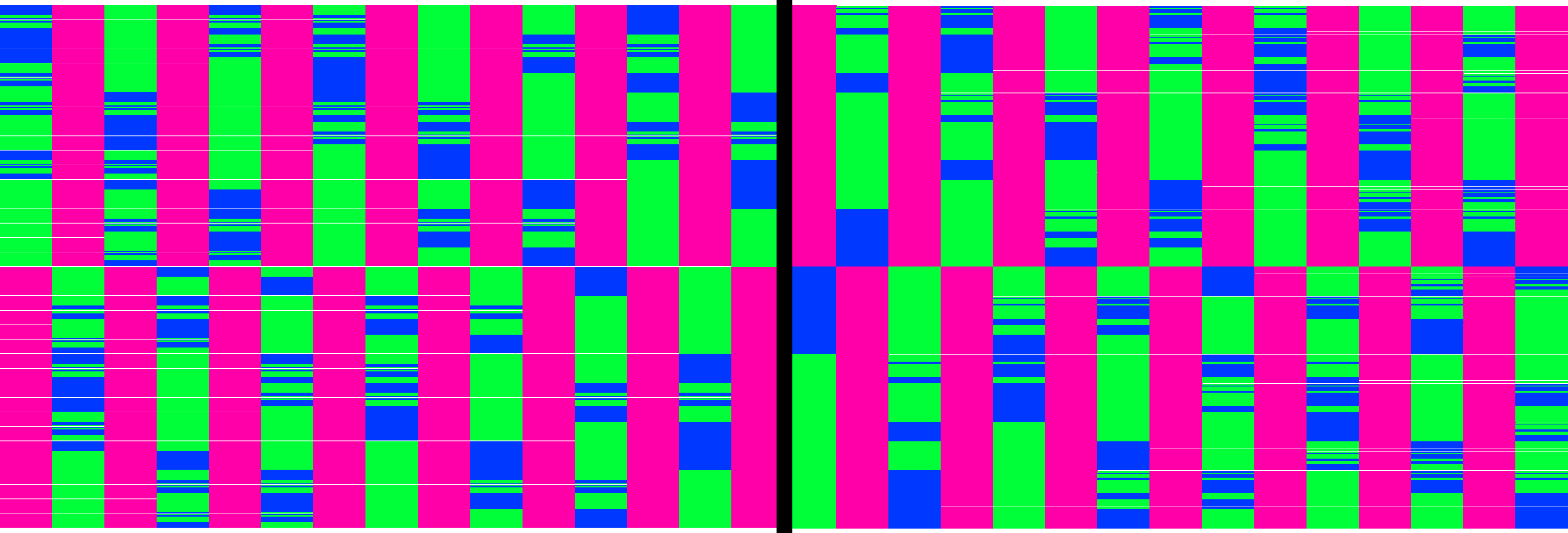}
\caption{High-level flow view of a substitution of nontrivial `height' 2 \cite{Host}.}
\label{FlowViewHeit}
\end{figure}
\end{example}

\section{The infinite interval exchange transformation}
\label{sec:IIET}
We return to the set of infinite addresses $\addr{\infty}$ to aid in our definition of the map $\iiet: [0,1] \to [0,1]$ that conjugates with the shift action via $\mtcong$.
Shifts in $\seqsp$ cause changes in the addresses in a way that is captured almost everywhere by a map\footnote{sometimes known as an {\em adic} or a {\em Vershik} map} on a stationary Bratteli diagram. It is already interesting to consider this system, but we need only the map, and solely as a bookkeeping device. For the interested, we briefly describe the Bratteli diagram first.

As is customary, there is one vertex at the top of the Bratteli diagram, and each level after that has vertex set $\domain$. The edges from a vertex $(\alet, j)$ at level $n$ are to the vertices $(\blet, k) \in \loc{\alet}$ at level $n+1$. Alternatively, the edges into $(\alet, j)$ at level $n$ from level $n-1$ connect to each $(\blet, k)$ for which $\subs(\alet)(j) = \blet$. We call the edge set $\edges$. The canonical partial order on each level is $(\alet, j) < (\alet,j+1)$ where $0 \le j < |\subs(\alet)|-1$. This relates the minimal and maximal paths in the Bratteli diagram to the the rightmost and leftmost positions in the domains of the supertiles.
Together $(\domain,\edges)$ form a stationary Bratteli diagram whose path space is given by $\addr{\infty}$. 

\begin{example}
For the period-doubling example, the vertices at each level are labeled by $\{A0, A1, B0, B1\}$ and $\loc{A} = \{A0, B0, B1\}$ and $\loc{B}=\{A1\}$. Here are the first few levels of the diagram.
\centerline{\includegraphics[width=.35\textwidth]{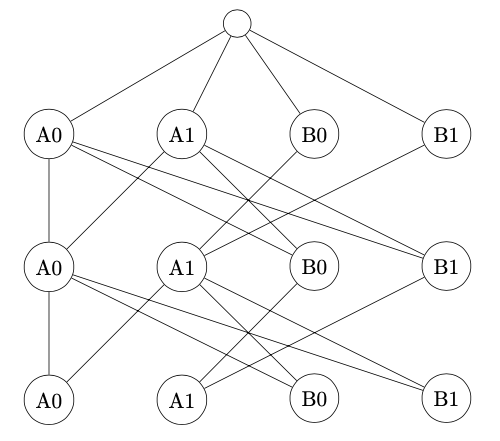}}
\end{example}

There are exactly $\numa$ minimal and $\numa$ maximal paths in each $\addr n$ that address the leftmost and rightmost positions of the $n$-supertile of each type, and we denote these by  
$\plmin{n}{\alet}$ and $\plmax{n}{\alet}, \alet \in \ab$.  For any $\poslist{} = ((\alet_1, j_1), (\alet_2, j_2), ...) \in \addr{}$ define 
$\enn(\poslist{})$ to be the first index at which an element of $\poslist{}$ can be increased, i.e. the smallest $k$ for which $\poslist{}[1,...k] \neq \plmax{k}{\alet_k}$. If $\enn(\poslist{}) = N$, then 
\[\poslist{}= (\plmax{N-1}{\subs(\alet_N)(j_N)}, (\alet_N, j_N),  (\alet_{N+1},j_{N+1}), ...),
\] where we note that $\subs(\alet_N)(j_N)$ is the type of the $N-1$ supertile in position $j_N$ of the $N$-supertile of type $\alet_N$.

\begin{definition}
The map $\vvv: \addr{} \to \addr{}$ is defined for any\[\poslist{} = \left(\plmax{N-1}{\subs(\alet_N)(j_N)}, (\alet_N, j_N),  (\alet_{N+1},j_{N+1}), \ldots\right)\] with $j_N < |\subs(\alet_N)| - 1$ to be
\begin{equation}
\vvv(\poslist{}) = \left(\plmin{N-1}{\subs(\alet_N)(j_N+1)}, (\alet_N, j_N+ 1),   (\alet_{N+1}, j_{N+1}), \ldots\right).
\label{eqn:vershikmap}
\end{equation}
The map $\vvv$ is not defined for any $\poslist{}$ with $\enn(\poslist{}) = \infty$.
\end{definition}

Here is how $\vvv$ keeps track of the address $\poslist{}(\T)$ as $\T$ is shifted. We define $\enn(\T) = \enn(\poslist{} (\T))$.
There is no change to the address of any $k$-supertile where $k>\enn(\T)$, since the boundary being crossed over is in its interior. The address of the $\enn(\T)$-supertile is increased by one but the supertile type does not change. All the supertiles of levels $k<\enn(\T)$ are now at the left of their domains in the minimal position. The types of those subtiles depends on $\subs(\alet_N)(j_N+1)$.

Note that $\poslist{\enn(\T)}(\T)$ is the address of the lowest-level supertile whose $n$-cylinder contains both $\T$ and $\shift(\T)$. That means $\poslist{}(\T)(k) = \poslist{}(\shift(\T))(k)$ for all $k > \enn(\T)$ and we say that $\poslist{}(\T)$ and $\poslist{}(\shift(\T))$ are  {\bf \em tail equivalent}.

\begin{lemma}
For all $\T \in \seqsp$ with $\enn(\T) < \infty$, 
$\poslist{}(\shift(\T)) = \vvv(\poslist{}(\T)).$
\label{lem:shiftvershikcommute}
\end{lemma}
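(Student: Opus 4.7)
The plan is to unpack what $\enn(\poslist{}(\T)) = N$ tells us about the position of $\T(0)$ inside the nested sequence of supertiles at the origin, apply the shift, and read off the new address level by level.

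First, by the definition of $\enn$, if $\enn(\poslist{}(\T)) = N$, then $\poslist{k}(\T) = \plmax{k}{\alet_k}$ for every $k < N$, while $\id{a}_N = (\alet_N, j_N)$ does not occupy the maximal position of $\subs(\alet_N)$. Unwinding the construction of addresses, this says that $\T(0)$ sits in the \emph{last} slot of its $k$-supertile for each $k = 1, \dots, N-1$, but not of its $N$-supertile; in particular, $\T(1)$ lies in the same $N$-supertile as $\T(0)$, inside the next $(N-1)$-supertile over. For any level $k > N$, the point $\T(1)$ lies in the same $k$-supertile as $\T(0)$ because we have not crossed a level-$k$ boundary, so by recognizability $\id{a}_k(\shift(\T)) = \id{a}_k(\T)$ for all $k > N$.

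Next I would identify the $N$th and $(N-1)$st entries in the new address. Within the $N$-supertile of type $\alet_N$, the $(N-1)$-supertile containing $\T(1)$ is the $(j_N+1)$st one, and by the definition of the substitution its type is $\blet$, the $(j_N+1)$st letter of $\subs(\alet_N)$. Therefore $\id{a}_N(\shift(\T)) = (\alet_N, j_N+1)$, matching \eqref{eqn:vershikmap}. Finally, for each $k < N$, the point $\T(1)$ sits at the \emph{first} position of a newly entered $k$-supertile (since the $(k+1)$-supertile has just been entered at its beginning), so $\poslist{k}(\shift(\T)) = \plmin{k}{\blet}$, again matching the definition of $\vvv$.

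Assembling these three observations yields $\poslist{}(\shift(\T)) = \vvv(\poslist{}(\T))$. The only subtle point, and the step I would write most carefully, is the bookkeeping used to pass from ``$\T(0)$ is at the end of its $(N-1)$-supertile but not of its $N$-supertile'' to ``$\T(1)$ lies in the next $(N-1)$-supertile within the same $N$-supertile, at its initial position.'' This uses recognizability (so that the supertile decompositions of $\T$ and $\shift(\T)$ really are determined and consistent) together with the definition \eqref{eqn:supertiledef} of $\subs^n$ as the concatenation $\subs^{n-1}(\alet_1)\cdots\subs^{n-1}(\alet_\ell)$, which tells us exactly what the ``next $(N-1)$-supertile'' is and what its type must be. Once that is set up, the match with \eqref{eqn:vershikmap} is immediate.
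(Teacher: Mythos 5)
Your proof is correct and follows essentially the same route as the paper's: it reads off from $\enn(\poslist{}(\T)) = N$ that the origin sits at the end of its $(N-1)$-supertile but not of its $N$-supertile, observes that shifting lands at the first position of the next $(N-1)$-supertile (of type $\blet$, the $(j_N+1)$st letter of $\subs(\alet_N)$) while leaving all addresses above level $N$ unchanged, and matches the result to \eqref{eqn:vershikmap}. The paper's proof is exactly this argument, stated slightly more compactly.
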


\begin{proof}
Let $\enn(\T)=N$ so that the origin is situated at the rightmost end of all of $\T$'s $k$-supertiles for $k = 1, ..., N-1$. Write $\poslist{}(\T) = (\plmax{N-1}{\subs(\alet_N)(j_N))}, (\alet_N, j_N),  (\alet_{N+1}, j_{N+1}), \ldots)$. 

Shifting $\T$ moves the origin to the first element of the next $(N-1)$-supertile inside $\subs^{N}(\alet_N)$, which has type $\subs(\alet_N)(j_N+1)$. Now the origin in $\shift(\T)$ is in the leftmost position of the $(N-1)$-supertile of type $\subs(\alet_N)(j_N+1)$, so
$\poslist{N-1}(\shift(\T)) = \plmin{N-1}{\subs(\alet_N)(j_N+1)}$.  None of the addresses of supertiles of larger order than $N$ are altered. This means
\[ \poslist{}(\shift(\T)) = \left(\plmin{N-1}{\subs(\alet_N)(j_N+1)}, (\alet_N, j_N+1), (\alet_{N+1}, j_{N+1}), \ldots \right). \]
By equation \eqref{eqn:vershikmap} this is equal to $\vvv(\poslist{}(\T))$.
\end{proof}

In order to define $\iiet$ as a function that commutes with $\shift$ we need to give a unique address to almost every $x \in [0,1]$. That requires identifying the partition element that contains $x$ at each level. This is not a problem for the points on which $\mtcong$ is invertible, in which case we define $\poslist{}(x) =\poslist{}(\mtcong^{-1}(x))$. 

If $\mtconj$ is not one-to-one at $x$, Corollary \ref{cor:inv.one.to.one} implies that $x$ is the left endpoint of a partition element. Thus there is a $\T$ for which $x = \mtcong(\T) = \mtcong_K(\T)$ as a finite sum. We choose this preimage and define $\poslist{}(x) = \poslist{}(\T)$ and $\enn(x) = \enn(\T)$ for this $\T$. This allows $\iiet$ to take $x$ along with the elements in the interval above it. 

\begin{definition}\label{def:iiet}
 For $x \in [0,1]$ with $\enn(x) = N < \infty$, define the {\bf \em infinite interval exchange transformation given by $\subsdual$} to be
\begin{equation}
\iiet(x) = x - \Lft_N(\poslist N(x)) + \Lft_N(\vvv(\poslist N(x))).
\label{eqn:subsiiet}
\end{equation}
\end{definition}

\begin{theorem}\label{thm:iiet}
Let $\subs$ be a recognizable substitution with minimal subshift and let $\mtconj: (\seqsp, \mu) \to ([0,1], m)$ be a canonical isomorphism given by $\subsdual$. The IIET $\iiet$ given by $\subsdual$ is defined almost everywhere, and $\mtconj$ is a measurable conjugacy between $(\seqsp,\shift, \mu)$ and $([0,1],\iiet,m)$.
\end{theorem}

\begin{proof}
There are finitely many points at which $\iiet$ fails to be defined. Since $x=1$ is never in any finite partition interval, $\iiet$ is not defined there. There are also $\numa$ infinite maximal addresses representing an infinite-order supertile with domain $(-\infty, ...,-1, 0]$, and none of these have an image under $\vvv$. 
The image under $\mtconj$ of these sequences may not have a well-defined image under $\iiet$. (It might, depending on whether it is in $\lends$, in which case the IIET will track only one of the possible orbits.)
At every other $x \in [0,1]$ we know that $\vvv$ is well-defined on a well-defined address $\poslist{}(x)$, and for these $x$, $\iiet(x)$ is well defined.

Let $\T \in \seqsp$ with $\poslist{}(\T)= \left((\alet_n, j_n)\right)_{n = 1}^\infty$ and $\enn(\T) = N < \infty$ so that $\mtconj(\T)$ lies in $\II(\poslist{N}(\T))$.
We can write $\mtconj(\T) = \Lft_N(\poslist{N}(\T)) +  \sum_{n = N+1}^\infty \frac{\lft(\alet_n,j_n)}{\lambda^{n-1}}$. We have
\begin{align*}
\iiet(\mtconj(\T)) &=\mtconj(\T) - \Lft_N(\poslist{N}(\T))+\Lft_N(\vvv(\poslist{N}(\T)))\\
&= \left(\Lft(\poslist{N}(\T)) +  \sum_{n = N+1}^\infty \frac{\lft(\alet_n,j_n)}{\lambda^{n-1}}\right)  - \Lft_N(\poslist{N}(\T))+ \Lft_N(\vvv(\poslist{N}(\T)))\\
&=\Lft_N(\vvv(\poslist{N}(\T)))+  \sum_{n = N+1}^\infty \frac{\lft(\alet_n, j_n)}{\lambda^{n-1}}\\
&=\Lft_N(\poslist{N}(\shift(\T)))+  \sum_{n = N+1}^\infty \frac{\lft(\alet_n, j_n)}{\lambda^{n-1}} =\mtconj(\shift(\T)) ,
\end{align*}
with the last two equalities following from lemma \ref{lem:shiftvershikcommute} and the fact that $\T$ and $\shift(\T)$ are tail equivalent.
\end{proof}

\begin{corollary}\label{cor:iietest}
For any $n \in \N$ there is an exchange of $n(|\domain| - |\ab|) + |\ab|$ 
intervals that is equal to $\iiet$ on all but $|\ab|$ intervals of total measure $\le \lambda^{-n}$.
\end{corollary}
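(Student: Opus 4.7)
The plan is to exhibit $\iiet_n$ by identifying, at each level $N = 1, \dots, n$, the $N$-cylinders on which $\iiet$ acts as a single translation, and then filling in the $|\ab|$ leftover ``fully maximal'' $n$-cylinders with auxiliary translations.

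First I would establish the key local description: if $\enn(\poslist{N}) = N$ (that is, $\poslist{N-1} = \plmax{N-1}{\alet}$ for some $\alet$ while $\poslist{N}$ is not itself a maximal prefix), then every $x \in \II(\poslist{N})$ has $\enn(x) = N$, since the first $N-1$ coordinates of $\poslist{}(x)$ are forced to be maximal and the $N$-th coordinate is already non-maximal. Equation \eqref{eqn:subsiiet} then specializes to $\iiet(x) = x + \bigl(\Lft_N(\vvv(\poslist{N})) - \Lft_N(\poslist{N})\bigr)$ uniformly on $\II(\poslist{N})$, so $\iiet$ is a single translation there. Call these the \emph{good intervals}.

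Next I would count. A good address at level $N$ has the form $(\plmax{N-1}{\alet}, \id{a}_N)$ with $\id{a}_N \in \loc{\alet}$ any element other than the unique maximal extension $(\blet,|\subs(\blet)|)$ (where $\blet$ is the unique letter whose $\subs(\blet)$ ends in $\alet$). Since $\sum_\alet |\loc{\alet}| = |\domain|$ and exactly $|\ab|$ extensions are maximal (one for each $\blet \in \ab$), each level contributes $|\domain| - |\ab|$ good intervals, for $n(|\domain|-|\ab|)$ in total. The good intervals are pairwise disjoint (each pins $\enn$ to its own level), and a point $x$ lies in none of them iff $\poslist{n}(x) = \plmax{n}{\alet}$ for some $\alet$. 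Hence their union is $[0,1]\setminus \bigcup_\alet \II(\plmax{n}{\alet})$, and the $|\ab|$ excluded intervals have total measure $\sum_\alet \mu([\alet])/\lambda^n = \lambda^{-n}$.

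To finish, define $\iiet_n$ to equal $\iiet$ on every good interval and to translate each $\II(\plmax{n}{\alet})$ onto $\II(\plmin{n}{\alet})$; the two have matching length $\mu([\alet])/\lambda^n$. This produces a piecewise translation on $n(|\domain|-|\ab|) + |\ab|$ intervals that agrees with $\iiet$ except on an exceptional set of measure $\lambda^{-n}$. The step that still requires care---and the main obstacle---is checking that $\iiet_n$ is a bona fide IET, i.e., that its translated pieces actually partition $[0,1]$ bijectively. This reduces to showing $\iiet\bigl(\bigcup \text{good intervals}\bigr) = [0,1]\setminus \bigcup_\alet \II(\plmin{n}{\alet})$, which I would deduce from the min/max duality in \eqref{eqn:vershikmap}: a good address $(\plmax{N-1}{\alet}, (\alet_N, j_N))$ is sent by $\vvv$ to $(\plmin{N-1}{\blet}, (\alet_N, j_N+1))$, and the incremented $N$-th coordinate cannot itself be the minimal extension of $\plmin{N-1}{\blet}$ (that would require $j_N+1 = 1$). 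Thus $\vvv$ bijects addresses with first non-maximal level exactly $N$ onto addresses with first non-minimal level exactly $N$, and letting $N$ range over $1,\dots,n$ exhausts the complement of the $|\ab|$ minimal $n$-prefixes, giving the claimed IET.
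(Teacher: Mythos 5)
Your argument is correct and takes essentially the same route as the paper: decompose $[0,1]$ level by level into the $|\domain|-|\ab|$ non-maximal refinements of the previous level's maximal cylinders (on each of which $\iiet$ is a single translation) and patch the $|\ab|$ remaining maximal $n$-cylinders by translating each $\II(\plmax{n}{\alet})$ onto $\II(\plmin{n}{\alet})$. One minor slip: for a fixed $\alet$ there need not be a unique $\blet$ with $\subs(\blet)$ ending in $\alet$ (there may be several or none), but only your aggregate count of $|\ab|$ maximal extensions over all $\alet$ is used, and that count is correct; your added check that $\iiet_n$ is genuinely bijective is a detail the paper leaves implicit.
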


\begin{proof} Fix an $n$ and consider $x \in [0,1]$. If $\enn(x) \le n$ let 
$\iiet_n(x) = \iiet(x)$. If $\enn(x) > n$ then it must be that $x \in \II(\plmax{n}{\alet})$ for some $\alet \in \ab$. Since $\mu([\subs(\plmax{n}{\alet}]]) = \mu([\subs(\plmin{n}{\alet})]) = \mu([\alet])/\lambda^n$ we define $\iiet_n(x) = x - \Lft(\plmax{n}{\alet})+\Lft(\plmin{n}{\alet})$. This temporarily fills in what happens at the ends of $n$-supertiles by sending them to the start of $n$-supertiles of the same type. The total length of the intervals on which $\iiet$ and $\iiet_n$ have the potential to differ is $\sum_{\alet \in \ab} \mu([\subs^n(\alet)]) = 1/\lambda^n$.

We count the number of intervals needed for $\iiet_n$ inductively. There are exactly $\numa$ maximal addresses and $\numa$ minimal addresses, one per element of $\ab$. To make $\iiet_1$, there are $|\domain|$ total intervals and $\numa$ of them have maximal addresses. On the nonmaximal intervals, of which there are $|\domain| - \numa$, $\iiet$ and $\iiet_1$ agree. On the remaining $\numa$ intervals they may disagree.

There are $|\domain| - \numa$ new nonmaximal intervals on which $\iiet_2$ and $\iiet$ agree that come from refining the maximal partition elements in $\PartitionI_1$. Thus $\iiet_2$ and $\iiet$ agree on $2(|\domain| - \numa)$ intervals and potentially disagree on $\numa$ intervals.

At each stage the function $\iiet_n$ can be thought of as refining $\iiet_{n-1}$ by filling in what happens to the $|\domain| - \numa$ new nonmaximal intervals that appear as the maximal elements of $\PartitionI_{n-1}$ are refined. The potentially disagreeing intervals are smaller at each stage by a factor of $1/\lambda$.

\end{proof}

Figure \ref{PDapproxiiet} shows a progression of the first three approximants for the period-doubling IIET. The blue is the part in agreement and the orange is the part that needs refinement. The vertical lines that extend to 0 are artifacts from the software used to generate the image and should be disregarded.
\begin{figure}[ht]
\includegraphics[width = 5.5in]{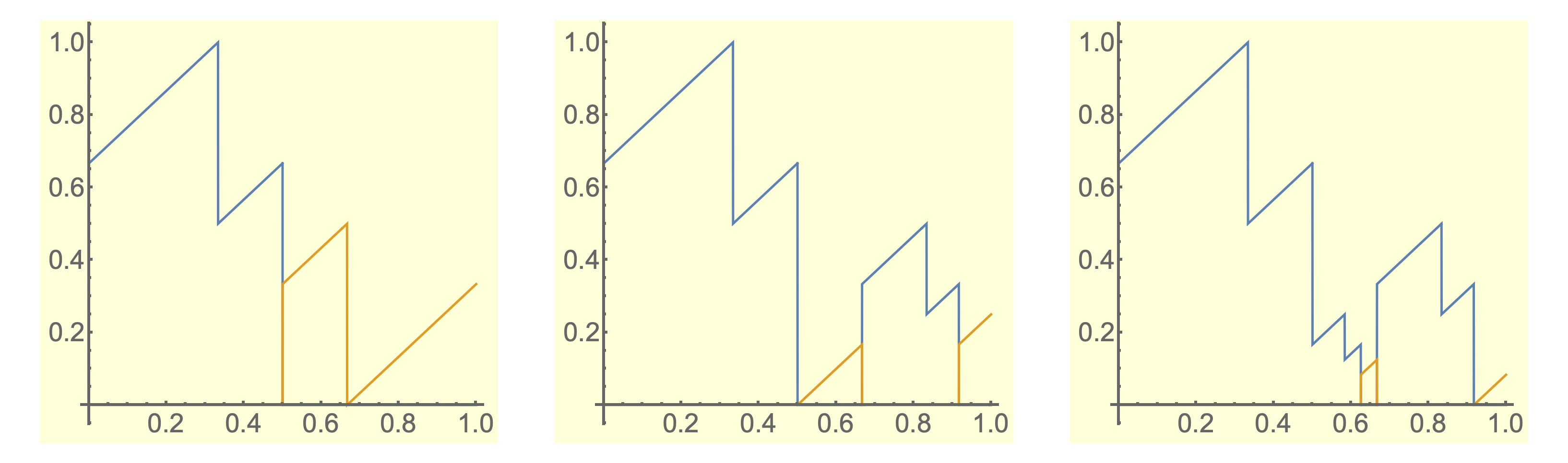}
\caption{The graphs of $\iiet_1, \iiet_2,$ and $\iiet_3$ for $\PD$.}
\label{PDapproxiiet}
\end{figure}
Although $\iiet_3$ is only accurate on $7/8$ of the interval, the pattern for filling in the remainder is visible from the progression. Moreover the refinements appear to provide the key to understanding the form of self-similarity taken. High-resolution approximations for the full flow view and IIET for the period-doubling substitution appear in figure \ref{fig:pdfinalstuff}.

\begin{figure}[ht]
\centering
\includegraphics[width=.9\textwidth]{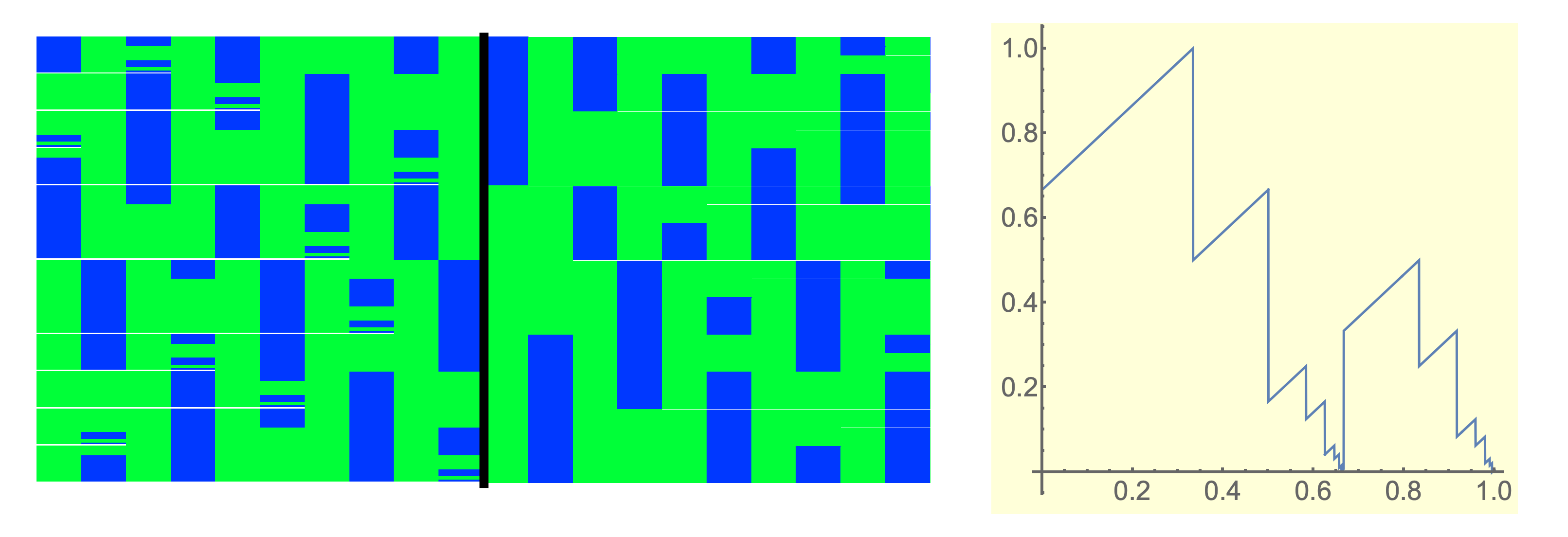}
\caption{High-level approximations to the flow view and IIET of the period-doubling substitution. The vertical lines in the IIET connect the ends of jump discontinuities.}
\label{fig:pdfinalstuff}
\end{figure}

\subsection{Self-similarity of some $\iiet$}

All of the IIETs shown in this document exhibit repetitive properties that appear to be a form of self-similarity, perhaps via a graph-directed IFS. This reveals geometrically the self-inducing structure of $\seqsp$. Here is a case where we can prove a specific form of self-similarity.
\begin{definition}
We call a substitution {\bf \em proper} if there are $\blet, \clet \in \ab$ such that $\subs(\alet)$ begins with $\blet$ and ends with $\clet$ for all $\alet \in \ab$. That is, for each $\alet \in \ab, \subs(\alet)(0) = \blet$ and $\subs(\alet)(|\subs(\alet)|-1)=\clet$.
\label{def:proper}
\end{definition}

\begin{proposition}\label{prop:self-similarilty}
Let $\subs$ be a recognizable and proper substitution with minimal subshift, and suppose $|\subs(\alet)|>1$ for each $\alet \in \ab$.
Then $\subs$ has a canonical IIET $\iiet$ for which there is a constant $\kappa \in [0,1)$ such that\begin{equation}
\iiet(x) = \lambda(\iiet(x/\lambda) + \kappa) \text{ for a.e. } x \in [0,1].
\label{eqn:selfsimiietprop}
\end{equation}
\end{proposition}

\begin{proof}
Suppose every substituted letter begins with $\blet$ and ends with $\clet$ and construct the initial partition $\PartitionI_0$ so that $\II(\clet) = [0, \mu([\clet]))$. No other restrictions on $\mtcong_0$ and $\PartitionI_0$ are required.
 
Next, we need to choose $\lft$ and $\subsdual$.
Since every supertile ends in a $\clet$, we know $\subsdual(\clet)$ contains all the letters in $\ab$ at least once. That immediately tells us that $\mu(\subs([\clet]))\ge 1/\lambda$. Allow the prefix of $\subsdual(\clet)$ be given by the letters of $\ab$ in whatever order was chosen for $\PartitionI_0$ so that $[0, 1/\lambda)$ is partitioned the same way as $\PartitionI_0$ except scaled by $1/\lambda$. We will use the interval $[0, 1/\lambda)$ to be associated to the maximal address of each 1-supertile.

By similar logic we know that $\subsdual(\blet)$ contains all of the letters of  $\ab$ at least once
and so we include a copy of $\PartitionI_0$ scaled by $\lambda$ in $\II(\blet)$ for the supertiles with minimal addresses. Denote this copy of $\PartitionI_0$ as $[\kappa, \kappa + 1/\lambda)$ and note that $\kappa$ can be chosen to be in $[1/\lambda, 1)$ since $|\subs(\alet)|>1$ for each $\alet \in \ab$. These choices constrain a portion of $\subsdual(\clet)$ and $\subsdual(\blet)$; there are no restrictions on the rest of $\subsdual$.
Consider the canonical isomorphism $\mtcong$ and IIET $\iiet$  for this choice of initial partition and refinement maps. We show that $\iiet$ is self-similar.

The square $Q=[0, 1/\lambda) \times [\kappa, \kappa + 1/\lambda)$ contains all transitions from maximal 1-supertiles to minimal 1-supertiles and is a scaled-down copy of the graph of $\iiet$ on all of $[0,1] \times [0,1]$. Any $x \in [0, 1/\lambda)$ has $\enn(x) \ge 1$ and corresponds to the 1-supertiles with maximal addresses (``maximal 1-supertiles", for short). The maximal 1-supertiles that are not maximal 2-supertiles correspond to values of $x$ in the interval $[1/\lambda^2, 1/\lambda)$. When the 2-supertile corresponding to $x$ is shifted, the maximal 1-supertile at 0 becomes a minimal 1-supertile with the same transitions as at the first level. 
That means that if $x \in [1/\lambda^2, 1/\lambda)$, then $\iiet(x) = \lambda^{-1} \iiet(\lambda x)+ \kappa$. Alternatively, if $x \in [1/\lambda, 1)$, then $\iiet(x) = \lambda(\iiet(x/\lambda) - \kappa)$.

We extend the result to  $[1/\lambda^2, 1/\lambda)$ next. The interval $[0, 1/\lambda^2)$ contains all $x$ for which $\enn(x) \ge 2$; it corresponds to the 2-supertiles with maximal addresses. It must therefore be mapped 
to the interval of length $1/\lambda^2$ corresponding to the 2-supertiles with minimal addresses, which begins at $\kappa(1 + 1/\lambda)$. The transitions between non-maximal 3-supertiles are the same as the transitions between non-maximal tiles within their supertiles, so \[\iiet(x) =\lambda^{-2} \iiet(\lambda^2x) + \kappa(1+1/\lambda).\]
Since $\lambda^2 x \in [1/\lambda, 1)$, that means 
\[
\iiet(\lambda^2 x)= \lambda(\iiet(\lambda^2 x/\lambda) - \kappa)=\lambda(\iiet(\lambda x) -\kappa).
\]
Plugging into the expression for $\iiet(x)$ yields 
\[\iiet(x) = \lambda^{-2}\left(\lambda(\iiet(\lambda x) -\kappa)\right) + \kappa(1+1/\lambda)=\lambda^{-1}\iiet(\lambda x) + \kappa.
\]
Since $\lambda x \in [1/\lambda^2, 1/\lambda)$ this extends the result \eqref{eqn:selfsimiietprop} to $[1/\lambda^2, 1)$. An analogous induction argument will find the result to hold for all intervals of the form $[1/\lambda^n, 1)$. 
\end{proof}

\begin{remark}
Examples satisfying the proposition can be made with many known ergodic properties. The one shown in figure \ref{fig:endsame} has purely discrete dynamical spectrum.
\end{remark}

\begin{figure}[ht]
\centering
\includegraphics[width=.3\textwidth]{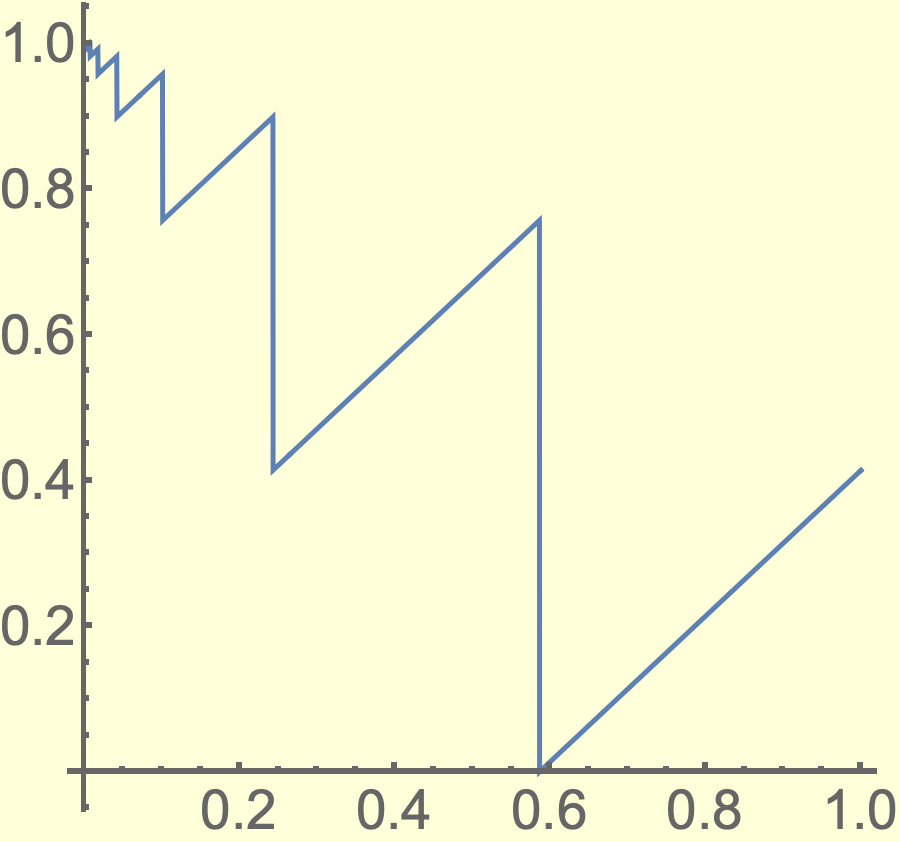}
\caption{Shazam! A self-similar IIET for $A \mapsto BBA, \, B \mapsto BA$.}
\label{fig:endsame}
\end{figure}

\section{Spectral analysis of $(\seqsp,\shift,\mu)$}
\label{sec:spectral}

\subsection{Review: spectral analysis in a general system (see e.g. \cite{Petersen.book})}
One way to study the behavior of a dynamical system $(X, T, \nu)$ is through the measurable functions it supports. It is particularly convenient to consider $L^2(X, \nu)$ because it is a Hilbert space. 
The ``dynamical spectrum" is the spectrum of the so-called Koopman operator $U$ on $L^2(X, \nu)$ given by $U(h) =  h\circ T$. It is natural to ask what happens under powers of the operator, which leads to the Fourier analysis approach we describe here. 

The {\bf \em spectral coefficient} $\hat{h}(j)$ is defined as the inner product
\[\hat{h}(j) = \left\langle U^j(h), \, h \right\rangle= \int_X h(T^j(x))  \overline{h(x)}d \nu(x).\]
This expression compares the measurements $h$ takes at each pair $(x,T^j(x))$ and averages the result over all $x$. 
It is well-known that the sequence of spectral coefficients is positive definite and so there is a {\bf \em spectral measure} $\specmeas_h$ on the circle $S^1$ for which
\[\hat{h}(j) =\int_{S^1} z ^j d\specmeas_h \quad \text{ for all } \quad j \in \Z.
\]

An {\bf \em eigenfunction} for $T$ is an $h \in L^2(X, \nu)$
for which there exists an {\bf \em eigenvalue} $R \in \R$ with $h(T(x)) = R\, h(x)$ for all $x \in X$. All constant functions are eigenfunctions and so it is customary to consider $h$ for which $\int_X h \, d\nu = 0$ when looking at spectral measures.

\subsection{The spectral measure of $\mtcong$}
In $\mtcong$ we have an extraordinary function in $L^2(\seqsp,\mu)$. It the best `observable' there is: it measures the features of  $\T \in \seqsp$ so accurately that it can almost always place it in a unique location in $[0,1]$. Moreover, that location is close to other $\T$s that strongly `resemble' it as measured using any other test function in $L^2(X,\mu)$. We conjecture that this implies that the spectral measure of $\mtcong$ is the maximal spectral type of the system.

For each $j \in \Z$ the spectral coefficient is computed using Corollary \ref{lem:cov} to be
\[
\hat{\mtconj}(j) = \int_\seqsp \mtconj(\shift^j(\T)) \overline{\mtconj(\T)} \mu(d\T)    
= \int_0^1\, x \, \iiet^j(x) dx,
\]
and since $\iiet^k(x) = x + c$, where $c$ depends only on any sufficiently nonmaximal partition element containing $x$, the integrand is a piecewise sum of upward-facing quadratics $x^2 + c x$.

It is known \cite{Host} that shifting by sequences of the form $(|\subs^n(\alet)|)_{n=1}^\infty$ reveals the presence or absence of nonconstant eigenfunctions. Figures \ref{fig:2adiciietcomp} through \ref{fig:npfiietbigits} in the discussions below exhibit a variety of behaviors of  $\iiet^{|\subs^n(\alet)|}(x)$ for different substitutions, beginning with the constant-length case.

\subsection{Special case: constant-length substitutions}

We say $\subs$ is a substitution of {\bf \em constant length} if there is a $K\in \{2, 3, ...\}$ for which $|\subs(\alet)| = K$ for all $\alet \in \ab$.  In this case $K$ is the expansion factor and all supertiles are of length $K^n$. Our results may not surprise you  after looking at the comparison of relatively large powers of four examples with $K=2$ in figure \ref{fig:2adiciietcomp}.

\begin{figure}[ht]
\centering
\includegraphics[width=.9\textwidth]{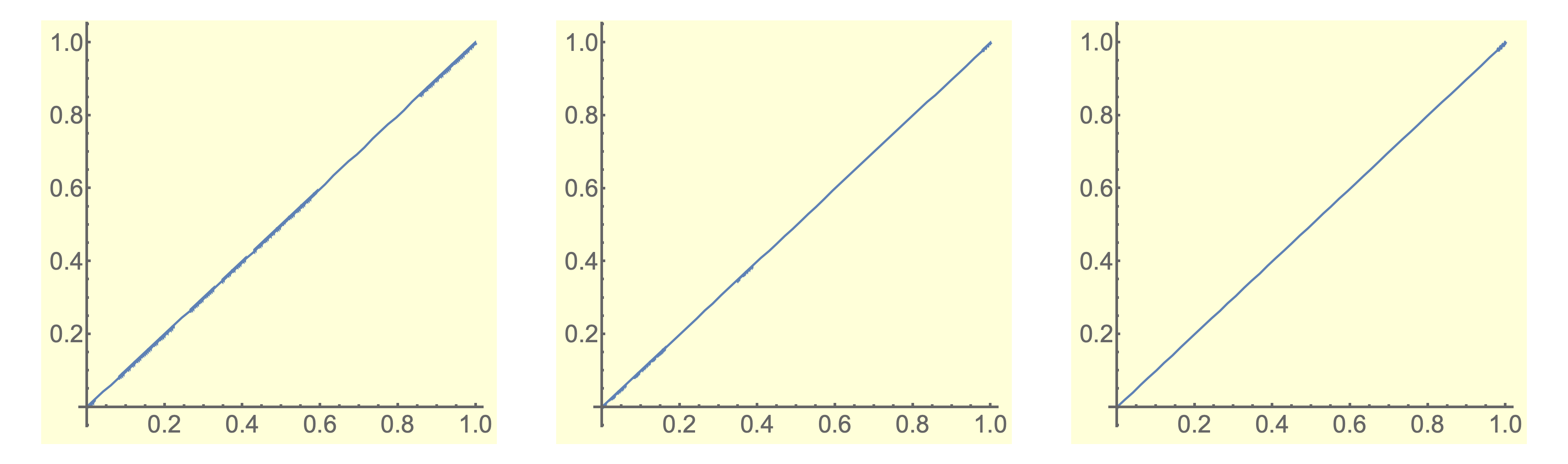}\\
\includegraphics[width=.9\textwidth]{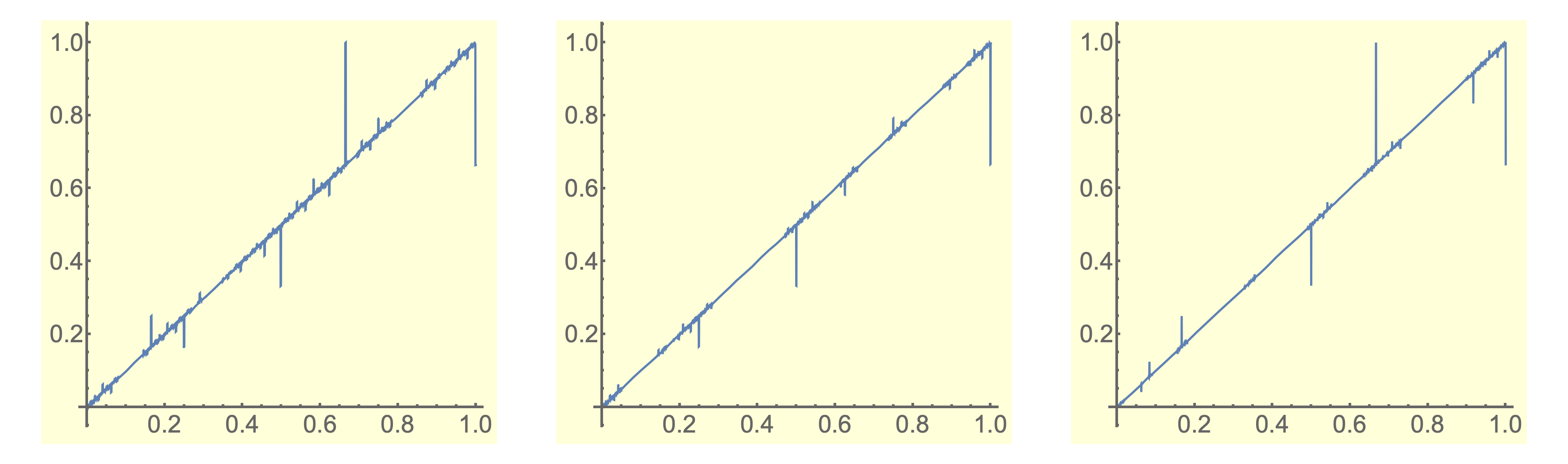}\\
\includegraphics[width=.9\textwidth]{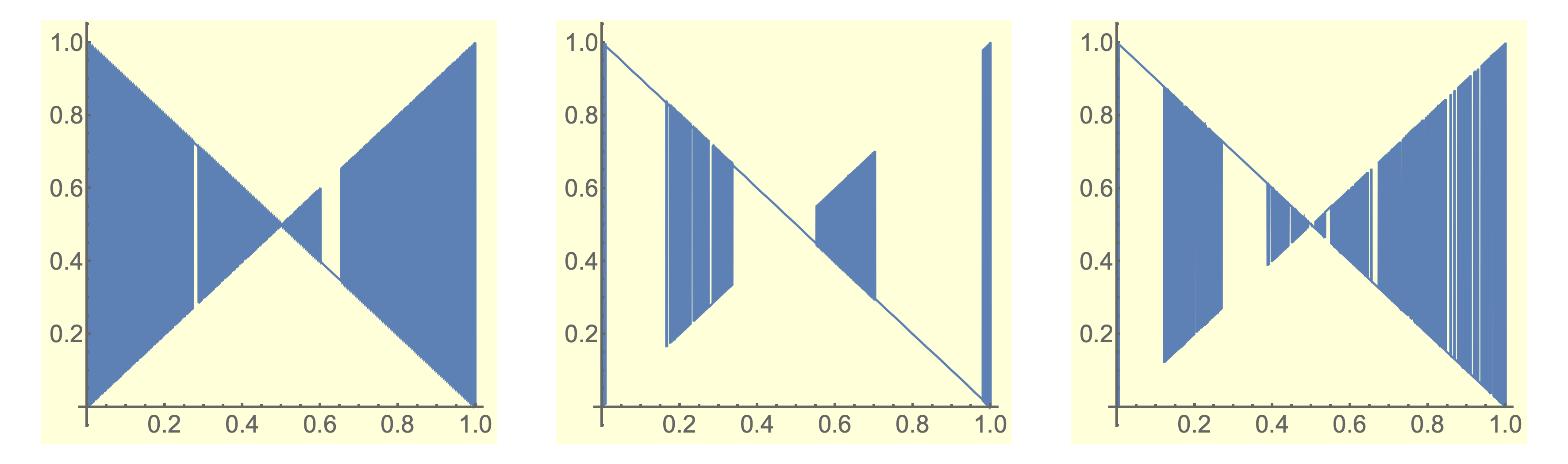}
\includegraphics[width=.9\textwidth]{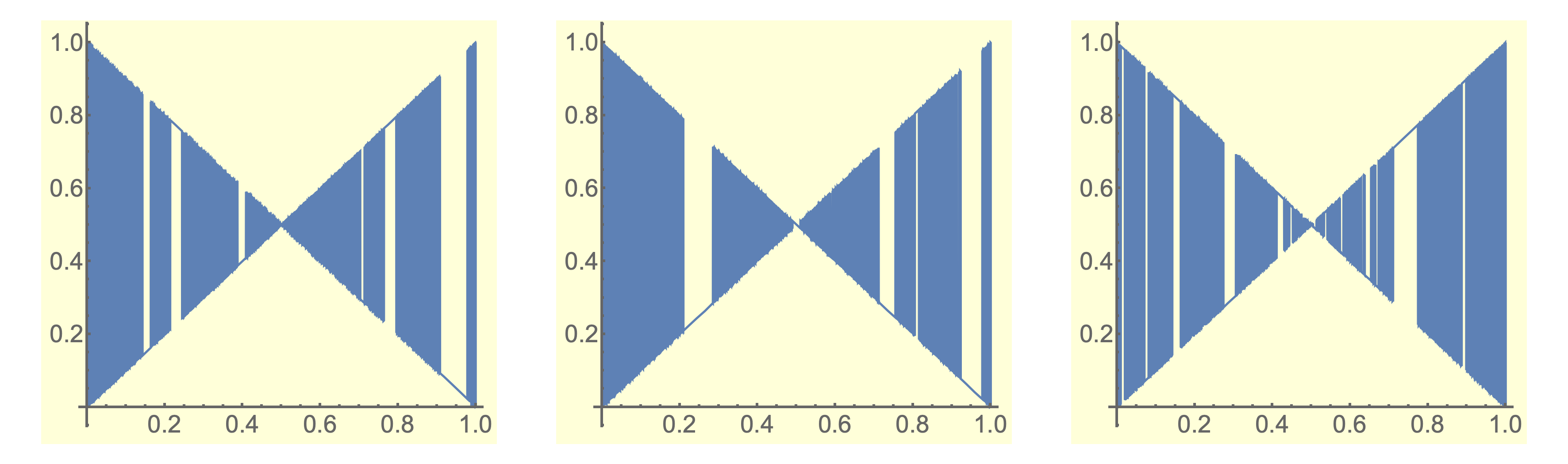}
\caption{Rows are the 2-adic odometer, period-doubling, Thue--Morse, and Rudin--Shapiro IIETs. The graphs are $\iiet_{20}^j$, where $j = 2^8, 2^9, $ and $2^{10}$.}
\label{fig:2adiciietcomp}
\end{figure}

\begin{proposition}\label{prop:constantlengthfinite}
If $\subs$ is a primitive recognizable constant-length substitution with expansion factor $K$ then $\left(\iiet^{K^n}(x)\right)_{n=1}^\infty$ has at most $|\ab|$ accumulation points for a.e. $x \in [0,1]$. 
\end{proposition}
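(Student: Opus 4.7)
The plan is to exploit the combinatorial fact that for a constant-length substitution with $|\subs(\alet)| = K$ for all $\alet \in \ab$, shifting by $K^n$ moves the origin exactly one $n$-supertile to the right while preserving its relative position inside each lower-order sub-supertile. Writing $\poslist{}(\T) = (\id{a}_1, \id{a}_2, \ldots)$ with $\id{a}_k = (\alet_k, j_k)$, the position components $j_1, \ldots, j_n$ agree for $\poslist{}(\T)$ and $\poslist{}(\shift^{K^n}(\T))$, while the type components $\alet_k$ and the entries at levels $k > n$ can change (the latter as a single increment at level $n+1$ with possible carries).

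This invariance lets me build a continuous factor map $\pi: \seqsp \to \Z_K$ to the $K$-adic integers via $\pi(\T) = \sum_{k \ge 1}(j_k - 1) K^{k-1}$. Recognizability makes $\pi$ continuous, and $\pi(\shift(\T)) = \pi(\T) + 1$. Since $K^n \to 0$ in the $K$-adic topology, any accumulation point $\T^*$ of $\{\shift^{K^n}(\T)\}_{n \in \N}$ in the compact space $\seqsp$ must lie in the fiber $\pi^{-1}(\pi(\T))$. By Proposition~\ref{Pf:mtcong} the map $\mtcong$ is uniformly continuous, and the conjugacy gives $\iiet^{K^n}(x) = \mtcong(\shift^{K^n}(\mtcong^{-1}(x)))$ for every $x \in [0,1] \setminus \lends$, a full-measure set. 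Hence every accumulation point of $\{\iiet^{K^n}(x)\}$ is of the form $\mtcong(\T^*)$ for some $\T^* \in \pi^{-1}(\pi(\T))$, and the proof reduces to bounding the size of this fiber.

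To bound $|\pi^{-1}(\pi(\T))| \le |\ab|$: fixing the position sequence $(j_k)$, any $\T^* \in \pi^{-1}(\pi(\T))$ corresponds to a compatible sequence of supertile types $(\alet_k^*)$ with $\alet_{k-1}^*$ equal to the $j_k$-th letter of $\subs(\alet_k^*)$. Defining $f_k: \ab \to \ab$ by sending $\alet$ to the $j_k$-th letter of $\subs(\alet)$, the fiber is in natural bijection with the inverse limit of the finite system $(\ab, f_k)$. Because the iterated images $f_1 \circ f_2 \circ \cdots \circ f_n(\ab)$ form a non-increasing sequence of nonempty subsets of the finite set $\ab$, they stabilize at some $E^* \subseteq \ab$; on this stabilized image the induced bonding maps are forced to be bijections, so the inverse limit is in bijection with $E^*$ and has cardinality at most $|\ab|$.

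I expect the main obstacle to be the fiber-bound step, specifically checking carefully that on the stabilized images the bonding maps are bijections and hence that the inverse limit has the claimed size. This step uses finiteness of $\ab$ together with primitivity and minimality of $(\seqsp,\shift)$, which rule out pathological collapses during stabilization and ensure that every compatible type sequence selected by the inverse limit actually realizes a genuine element of $\seqsp$. Once the bound $|\pi^{-1}(\pi(\T))| \le |\ab|$ is secured, the three ingredients combine to give at most $|\ab|$ accumulation points of $\{\iiet^{K^n}(x)\}$ for almost every $x \in [0,1]$.
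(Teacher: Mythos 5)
Your proof is correct in substance and it rests on the same combinatorial observation as the paper's: shifting by $K^N$ with $N\ge n$ preserves the position components $j_1,\dots,j_n$ of the address while leaving only the supertile types free, so at each level there are at most $|\ab|$ admissible $n$-addresses. The paper stops essentially there — it notes that $\iiet^{K^N}(x)$ for $N\ge n$ must land in one of at most $|\ab|$ level-$n$ intervals $\II(\poslist{n})$, that these families are nested as $n$ grows and shrink in length, and that the accumulation points are therefore the limits of at most $|\ab|$ nested chains. You instead make the maximal equicontinuous factor $\pi:\seqsp\to\Z_K$ explicit and bound the fiber $\pi^{-1}(\pi(\T))$ via an inverse limit of type-maps $f_k$; this buys a cleaner conceptual statement (the accumulation set of $\shift^{K^n}(\T)$ sits inside a single odometer fiber, pushed forward by the continuous map $\mtcong$), at the cost of more machinery, and your inverse-limit count is the paper's nesting argument in disguise. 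One repair is needed in the fiber-bound step: the inverse limit is not in general in bijection with the stabilized image $E^*$ of $f_1\circ\cdots\circ f_n(\ab)$ at level $0$ (take $f_1$ non-injective and all later $f_k$ the identity; then $E^*$ is a single point but the inverse limit is larger). The correct statement is that the eventual images $Y_k=\bigcap_{n>k}f_{k+1}\circ\cdots\circ f_n(\ab)$ carry surjective bonding maps $Y_k\to Y_{k-1}$, so $|Y_k|$ is nondecreasing and bounded by $|\ab|$, hence eventually constant; the bonding maps are then eventually bijections and the inverse limit has cardinality at most $|\ab|$. The bound you need survives, and note that primitivity and minimality are not actually used for this upper bound — finiteness of $\ab$ suffices.
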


\begin{proof}
Let $x$ be such that $\iiet^j(x)$ is defined for all $j \in \N$. Choose $\T \in \mtcong^{-1}(x)$ for which $\poslist{}(x) = \poslist{}(\T)$. Since $K^n$ is the length of an $n$-supertile, the $n$-supertile at the origin in $\shift^{K^n}(\T)$ and in $\T$ are the same position modulo $K^n$ but may be of different types. There are $|\ab|$ possible $n$-subintervals that could contain $\iiet^{K^n}(x)$.

For $N > n$, $\T(0)$ and $\shift^{K^N}(\T)(0)$ now are in the same position modulo $K^N$, so as before there are exactly $|\ab|$ subintervals that could contain $\mtconj(\shift^{K^N}(\T))$.  Shifting by $K^N$ matches up $n$-supertiles as well, so the possible intervals for $\T$ and $\shift^{K^N}(\T)$ are contained in the intervals for $\T$ and $\shift^{K^n}(\T)$. Thus there are $|\ab|$ nested sequences of intervals of vanishing length that $\iiet^{K^n}(x)$ can visit as $n \to \infty$. The limits of their left endpoints are the possible accumulation points for $\left\{\iiet^{K^n}(x) , \, n \in \N \right\}$.
\end{proof}

The spectrum of constant-length substitutions is well-understood (see \cite{Dekking} and the survey \cite{Fogg}). The spectrum always contains $\Z[1/K]$, representing the underlying $K$-adic odometer structure.
Questions of whether there are any other eigenfunctions, or any other significantly different functions at all, depends on how the letters populate the locations in $\subs^n(\alet)$ as $\alet$ varies. A fundamental notion is the following.

\begin{definition} A substitution $\subs$ of constant length $K > 1$ has a {\bf \em coincidence} if there are $N \in \N, \blet \in \ab$ and $\coinj \in \{0, \ldots, K^N-1\}$ such that $\blet= \subs^N(\alet)(\coinj)$ for all $\alet \in \ab$. Elements in $\domain$ of the form $(\alet, \coinj)$ as well as $\coinj$ itself are called {\bf \em coincidence addresses}, and we say that the substitution {\bf \em has a coincidence} if it has at least one coincidence address.
\end{definition}

\begin{theorem} Let $\subs$ be a primitive substitution of constant length $K>1$ with canonical isomorphism $\mtcong:(\seqsp, \shift, \mu) \to ([0, 1], \iiet, m)$. Then  $\lim_{n \to \infty}\iiet^{K^n}(x) =x$ almost everywhere if and only if $\subs$ has a coincidence.
\label{thm:coincthm}
\end{theorem}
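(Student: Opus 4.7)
The proof will track how $\shift^{K^n}$ acts on the canonical address $\poslist{}(\T) = ((\alet_1, p_1), (\alet_2, p_2), \ldots)$. Because every $n$-supertile has length exactly $K^n$, shifting $\T$ by $K^n$ moves $\T(0)$ from its position $P$ within its $n$-supertile to the same position $P$ in the adjacent $n$-supertile: the digits $p_1, \ldots, p_n$ are preserved and the tail beyond level $n$ is updated by an odometer-style action, while the types $\alet_1, \ldots, \alet_n$ may change, cascading down from a new $n$-supertile type $\alet_n'$. Via the expansion \eqref{eqn:LE(p)}, convergence $\iiet^{K^n}(x) \to x$ reduces to whether these type differences produce vanishing contributions to the weighted address sum; a direct computation shows $\alet_k = \alet_k'$ if and only if the column at position $\lceil P/K^k \rceil$ of $\subs^{n-k}$ is a coincidence.

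For the forward direction (coincidence implies convergence), I would first note that the coincidence at level $N$ position $\coinj$ self-propagates, and by a standard Perron--Frobenius argument on an auxiliary ``column-type'' substitution the density $1 - \rho_m$ of non-coincident columns at level $m$ decays exponentially in $m$ (see \cite{Dekking} or \cite[Ch.~5]{Fogg}). Choosing $M_n = \lceil 2 \log n / \log K \rceil$, a union bound over trajectory levels combined with Borel--Cantelli gives that for $\mu$-a.e.\ $\T$ and all sufficiently large $n$, the position $P$ of $\T(0)$ in its $n$-supertile lies in the ``good'' set of positions whose descending trajectory $\{\lceil P/K^{n-m} \rceil : m \ge M_n\}$ is coincident at every level $m \ge M_n$. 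On this good set $\id{a}_k = \id{a}_k'$ for every $0 \le k \le n - M_n$, so the partial sums defining $\mtcong(\T)$ and $\mtcong(\shift^{K^n}\T)$ agree through level $n - M_n$; the remaining tail contributes at most $O(K^{-(n - M_n)})$, which tends to $0$.

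For the reverse direction I argue the contrapositive via spectral theory. If $\iiet^{K^n}(x) \to x$ a.e., then for every bounded continuous $f$, $f \circ \iiet^{K^n} \to f$ a.e.\ and, by dominated convergence, in $L^2([0,1], m)$. Density of bounded continuous functions then gives that the Koopman operator $U$ of $\iiet$ satisfies $U^{K^n} \to I$ strongly on $L^2$. Applying this to the spectral measure $\mu_g$ of any $g \in L^2$ yields $\int_{S^1} |z^{K^n} - 1|^2 \, d\mu_g \to 0$, forcing $\mu_g$ to be supported on $\{z \in S^1 : z^{K^n} \to 1\} = \Z[1/K]/\Z$. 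This set is countable, so $U$ has pure point spectrum with eigenvalues contained in the $K$-adic rationals; Dekking's classical theorem \cite{Dekking} then forces $\subs$ to have a coincidence.

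The main technical hurdle, which I expect to be the crux of the argument, is the quantitative density-propagation step in the forward direction: showing that the existence of a single coincidence, combined with primitivity, forces a spectral gap on the auxiliary column-type substitution to give exponential decay $1 - \rho_m = O(r^m)$ for some $r < 1$. Although this is classical, weaving it carefully into the address framework of this paper --- so that simultaneous coincidence along the full descending trajectory of positions is guaranteed with the right rate --- requires bookkeeping to connect Dekking's statements about $\subs^n$ to the level-by-level structure of $\poslist{}(\T)$.
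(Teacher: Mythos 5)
Your forward direction reaches the right conclusion but by a much heavier route than necessary, and it has a quantitative wrinkle: with $M_n=\lceil 2\log n/\log K\rceil$ the bad-event probability is $O(r^{M_n})=O(n^{2\log r/\log K})$, which is summable only if $r<K^{-1/2}$, so you would need $M_n=C\log n$ with $C$ depending on the (unknown) decay rate $r$. More to the point, none of the column-density/Borel--Cantelli machinery is needed. The paper's argument is: after passing to a power, assume the coincidence occurs at level $1$ at position $\coinj$. A coincidence label at position $n$ of the address $\poslist{}(\T)$ determines the type of the $(n-1)$-supertile at the origin independently of the type $\pi_\ab(\id{a}_n)$, and hence (together with the positions, which are preserved by any shift by $K^N$, $N>n$) pins down the entire prefix $\poslist{n-1}(\T)$. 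So $\mtcong(\T)$ and $\mtcong(\shift^{K^N}\T)$ lie in the same interval $\II(\poslist{n-1}(\T))$ of length at most $K^{-(n-1)}$. Since by Lemma \ref{lem:meas0} almost every address contains coincidence labels at infinitely many levels $n$, convergence follows with no density-propagation estimate at all.

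The reverse direction contains a genuine gap. From $\iiet^{K^n}(x)\to x$ a.e.\ you correctly deduce that the Koopman operator satisfies $U^{K^n}\to I$ strongly, equivalently $\int_{S^1}|z^{K^n}-1|^2\,d\specmeas_g\to 0$ for every $g$. But this does \emph{not} force $\specmeas_g$ to be supported on $\{z: z^{K^n}\to 1\}$: convergence in $L^2(\specmeas_g)$ of $|z^{K^n}-1|$ to $0$ only gives a.e.\ convergence along a subsequence, and the set $\{z:\liminf_n|z^{K^n}-1|=0\}$ has full Lebesgue measure, so it carries plenty of continuous measures. Indeed, $(K^n)$ is a lacunary sequence, and lacunary sequences are rigidity sequences for weakly mixing transformations (Bergelson--del Junco--Lema\'nczyk--Rosenblatt), so there exist \emph{continuous} probability measures $\sigma$ on $S^1$ with $\hat\sigma(K^n)\to 1$. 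Hence ``$U^{K^n}\to I$ strongly'' does not imply pure point spectrum, and your appeal to Dekking's theorem is unsupported; you would need an additional argument specific to the continuous component of constant-length substitution spectra (e.g.\ its generalized Riesz-product structure) to rule this out. The paper avoids spectral theory entirely: it applies Egorov's theorem to get a set $\dd$ with $m(\dd)\ge 1-\delta$ on which the convergence is uniform, fixes $M$ with $|\iiet^{K^M}(x)-x|<\delta$ on $\dd$, and then shows by a counting argument on the sets $\ed(j)=\efull(j)\cap\dd$ that the position $\coinj$ maximizing $\mu(\ed(j))$ must be a coincidence of $\subs^M$ --- otherwise a cylinder $\shift^{\coinj}(\subs^M([\hat\alet\hat\blet]))$ of measure $>\delta/K^M$ is excluded from $\dd$, contradicting $m(\dd)\ge 1-\delta$. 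You should either adopt that combinatorial argument or supply the missing spectral input; as written, the ``only if'' half does not go through.
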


\begin{proof}
Suppose there is a coincidence, which by taking powers if necessary can be assumed to occur
at $\coinj \in \{0, \ldots, K-1\}$. Consider $\seqsp_c$, the set of all $\T \in \seqsp$ such that $\poslist{}(\T)$ contains infinitely many coincidence addresses. This is a set of full measure by Lemma \ref{lem:meas0} since it contains $\seqsp_0$.

Let $x = \mtconj(\T)$ for $\T \in \seqsp_c$ and let $\epsilon>0$. Find some $n > 1+|\ln(\epsilon)|$ at which $\poslist{}(\T)$ has a coincidence label, i.e.
\[ 
\poslist{}(\T)=\left(\poslist{n-1}(\T), (\alet_n, j_c), (\alet_{n+1}, j_{n+1}), \ldots\right)
.\]
Then $\poslist{n-1}(\T)$ is independent of $\alet_n$ and depends only on the positions $j_k$ for $1 \le k \le n-1$. Any shift of $\T$ by $K^N$ where $N \ge n$ aligns the $n$-supertiles since $K^N \equiv 0 \pmod{K^n}$, so $\poslist{}(\shift^{K^N}(\T))$ also has a coincidence label at $n$. 
That is, $\T$ and $\shift^{K^N}(\T)$ have the same $n-1$ supertile with 0 in the same location. Their images under $\mtconj$ must both be in $\II(\poslist{n-1}(\T))$
which has length $= 1/K^{n-1} < \epsilon$. Since $\mtcong(\shift^{K^N}(\T)) = \iiet^{K^n}(\mtcong(\T)) \to \mtcong(\T)$ this shows $\iiet^{K^n}(x) \to x$ for any $x \in \mtcong(\seqsp_c)$.
 
Now suppose $\lim_{n \to \infty}\iiet^{K^n}(x) =x$ almost everywhere and let \[[\alet \blet] := \{\T \in \seqsp \, | \, \T(0) = \alet \text{ and } \T(1) = \blet\}.\]
Let $\delta>0$ such that $\delta < \mu([\alet \blet])$ for all legal two-letter words $\alet \blet: \{0, 1\} \to \ab$. By Egorov's theorem there is a set $\dd \subset [0,1]$ on which the convergence is uniform and $m(\dd) \ge 1-\delta$. 
Choose $M$ so that if $n \ge M$ then $|\iiet^{K^n}(x) - x| < \delta$ for all $x \in \dd$.
 
For all $j = 0, 1, \ldots, K^M -1$, let $\efull(j)$ be the set of all sequences in the $j$th spot of their $M$ supertile in the sense that \[\efull(j) = \bigcup_{\alet \in \ab} \shift^j(\subs^M([\alet)]).\]
This has measure $\mu(\efull(j)) = 1/K^M$.  Let $\ed = \mtconj^{-1}(\dd)$ and $\ed(j) = \efull(j) \cap \ed$, and suppose $\coinj$ is an index for which $\mu(\ed(j)) \le \mu(\ed(\coinj))$ for all $j \in \{0, ..., K^M-1\}$. 
 We claim $\coinj$ is a coincidence for $\subs$.
 
 For the sake of contradiction suppose there are $\hat{\alet}$  and $\hat{\blet}$ that differ at their $\coinj$th spot: $\subs^M(\hat{\alet})(\coinj) \neq \subs^M(\hat{\blet})(\coinj)$. Without loss of generality we may assume that $\hat{\alet}$ and $\hat{\blet}$ are chosen so that the word $\hat{\alet} \hat{\blet}$ is a legal word since if no such choice was possible then $\coinj$ is a coincidence and we are done.
 
Suppose $\T \in \shift^{\coinj}(\subs^M([\hat{\alet}\hat{\blet}]))$. Since $\T \in \shift^{\coinj}(\subs^M([\hat{\alet}]))$ and $\shift^{K^M}(\T)  \in \shift^{\coinj}(\subs^M([\hat{\blet}]))$ we know that $\mtconj(\T)$ and $\mtcong(\shift^{K^M}(\T))=\iiet^{K^M}(\mtcong(\T))$ are in disjoint subintervals of $\PartitionI_{M}$.
Thus $|\mtconj(\T)-\iiet^{K^M}(\mtcong(\T))| > \delta$. 
That means $\mu(\ed(\coinj)) \le \mu(\efull(\coinj)) - \mu(\shift^{\coinj}(\subs^M[\hat{\alet}\hat{\blet}]))= 1/K^M - \mu([\hat{\alet}\hat{\blet}]))/K^M.$ Since $\mu(\ed(\coinj))$ is the maximum measure over the $j$s we see that
 \[\mu(\ed) = \sum_{j = 1}^{K^M} \mu(\ed(j)) \le K^M \mu(\ed(\coinj))\le 1-\mu([\hat{\alet}\hat{\blet}]) < 1-\delta.\]
 Since $\mu(\ed) = m(\dd)$, this contradicts the definition of $\dd$ and completes the proof.
\end{proof}

\subsection{The non-constant length case.}
The author had originally posited a connection between having purely discrete spectrum and some type of convergence of $\iiet^{|\subs^n(\alet)|}(x)$ to the identity. This problem is closely connected to the {\em Pisot substitution conjecture} \cite{AkiyamaBargePisotSubstitutionConjecture}. The next few figures show shifts by large supertile amounts for some familiar examples: Fibonacci in figure \ref{fig:fibiietbigits}, tribonacci in figure \ref{fig:ariietbigits}, and two substitutions with weakly mixing subshifts in figures \ref{fig:Chaconiietbigits} and \ref{fig:npfiietbigits}.

\begin{figure}[ht]
\centering
\includegraphics[width=.9\textwidth]{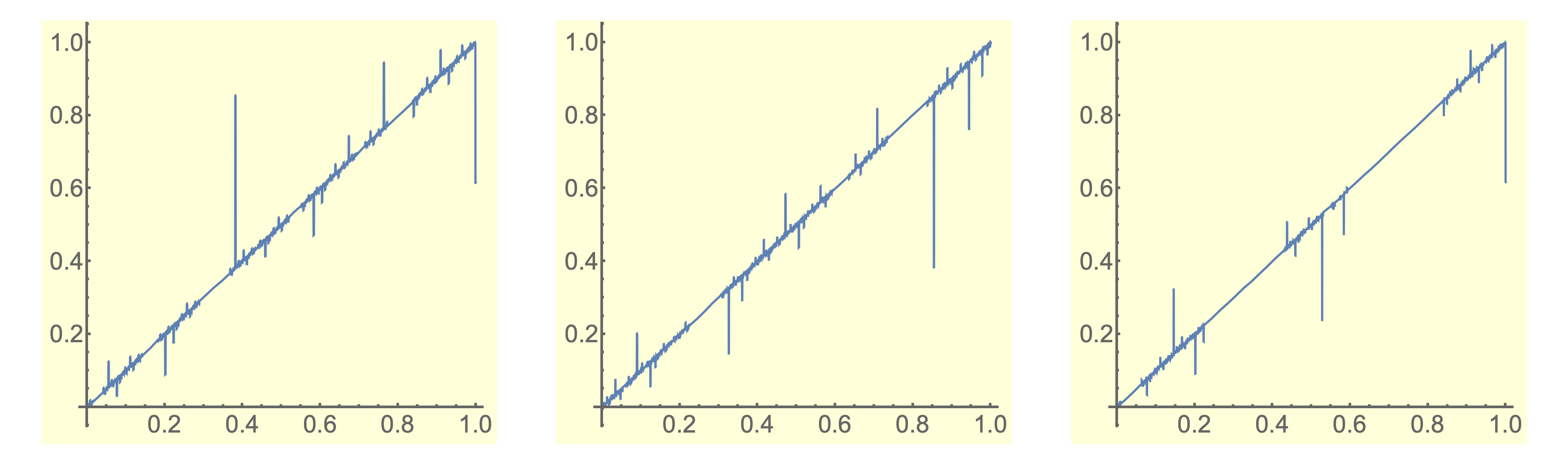}
\caption{Fibonacci IIET $\iiet_{30}^j$, where $j = 377, j = 610, $ and $j=987$.}
\label{fig:fibiietbigits}
\end{figure}

\begin{figure}[ht]
\centering
\includegraphics[width=.9\textwidth]{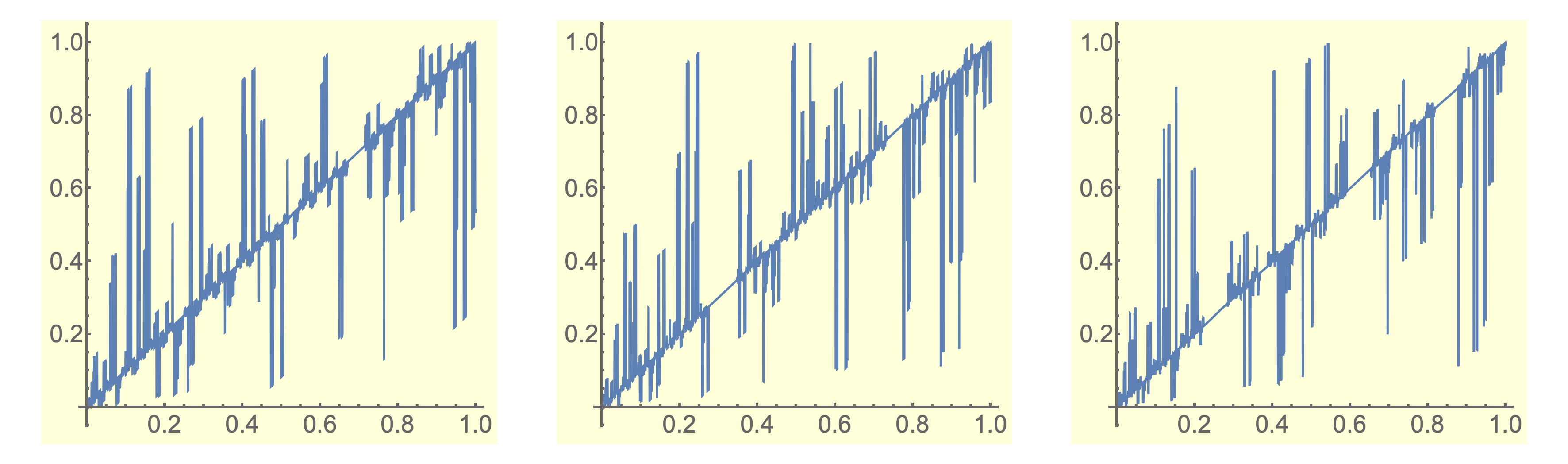}
\caption{The IIET $\iiet_{20}^j$ for the {\em tribonacci} substitution $A\mapsto AB, \,  B \mapsto AC, \, C \mapsto A$, for $j =204, j = 574, $ and $j = 927$.}
\label{fig:ariietbigits}
\end{figure}

\begin{figure}[ht]
\centering
\includegraphics[width=.9\textwidth]{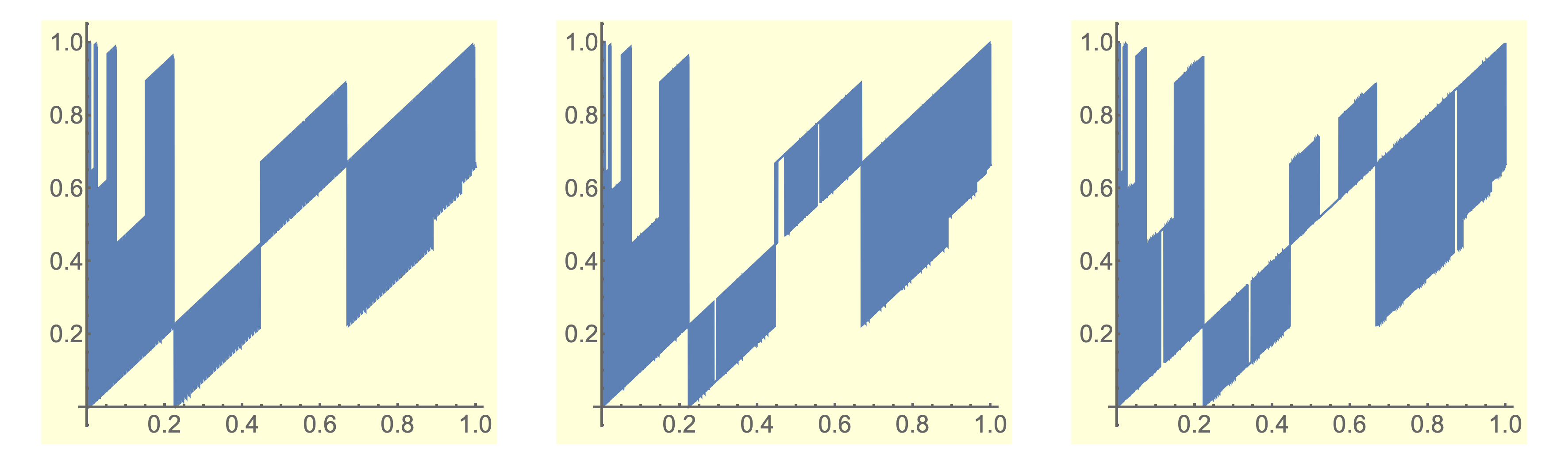}
\caption{The IIET $\iiet_{20}^j$ for the Chacon substitution $A \mapsto AABA$ and $B \mapsto B$, where $j = 121, j = 364,$ and $j = 1093$. The subshift is uniquely ergodic and weakly mixing.}
\label{fig:Chaconiietbigits}
\end{figure}

\begin{figure}[ht]
\centering
\includegraphics[width=.9\textwidth]{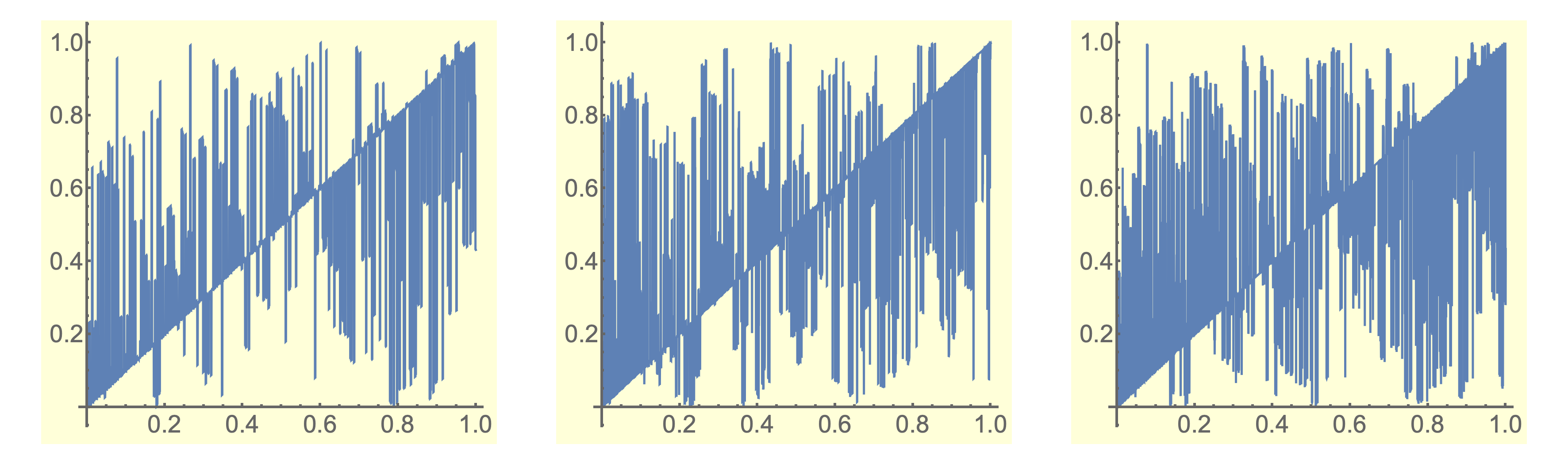}
\caption{The IIET $\iiet_{20}^j$ for the substitution $A\mapsto ABBB, \,  B \mapsto A$, for $j =217, j = 508, $ and $j = 1159$. This substitution has some singular continuous spectrum \cite{Me.Michael.Uwe.Robbie}.}
\label{fig:npfiietbigits}
\end{figure}

\section{Self-similar tilings, S-adic sequences and  fusion tilings of $\R$}
\label{Sec:generalizes-adic}

Our construction works in a number of other situations.  We provide a relatively complete description of the one-dimensional case  below. Extension to higher dimensions could be of interest but we will not discuss that here. 

\subsection{S-adic systems} These systems are generated with substitutions that can vary at each level. A general S-adic system is generated by a {\bf \em directive sequence} of morphisms $\subs_n: \ab_{n} \to \ab_{n-1}^+$ on a sequence $\ab_0,\ab_1, \ab_2, \ab_3, ...$ of finite alphabets. The $n$-supertiles are of the form $\subs_1(\subs_2(...(\subs_n(\alet))...)$, where $\alet \in \ab_n$. A subshift of all admissible sequences is obtained as in definition \ref{def:substitutionsubshift}.
A classic  type of S-adic system is made by taking all substitutions with a given transition matrix $M$ on a single alphabet $\ab$ \cite{ArnouxMizutaniSellami}. In that situation the estimates from corollary \ref{cor:iietest} still hold, and a single dual substitution can be chosen to construct the flow view.

For S-adic symbolic systems the notion of recognizability becomes more nuanced \cite{BertheRecog2018}. We assume the strongest form: {\em full recognizability}, where each substitution in the directive sequence is recognizable.
We also assume that the subshift admitted by the supertiles is minimal. In this case there is no direct Perron-Frobenius theorem to give us information about the natural lengths or frequencies. However, there is a sequence of transition matrices, and any invariant measure for the subshift must obey a type of transition-consistency that forces the left equation of \eqref{eqn:transitionconsistent} to be true at each level.

There are two adaptations needed to our proof. First, we need an address set $\domain_{n}$ for each $\subs_n$, constructed as before \eqref{def:domain}. Addresses for $n$-cylinders will be those elements of $\domain_1\times \domain_2\times ...\times \domain_n$ that represent allowable supertile inclusions. Those inclusions are given at each level as in equation \ref{eqn:parentset} as follows. For each $\alet \in \ab_n$,
\[\locn{\alet} =\{(\blet,j) \in \domain_{n+1} \, | \, \subs_{n+1}(\blet)(j) = \alet\}
=\{(\blet, j) \in \domain_{n+1} \, | \, \alet \text{ is the } j\text{th letter of }\subs_{n+1}(\blet ) \}.\]
These provide the edge sets for the Bratteli diagram. The refining sequence $\{\Partition_n\}$ of $\seqsp$ is canonical, as is the order on the Bratteli diagram. 

The second adaptation is that at every level we need to produce a new left endpoint function $\lft_n: \locn{\alet} \to [0, 1)$ for each $\alet \in \ab_n$. Because the measure is transition-consistent, we can partition the intervals for $n$-supertiles into the subintervals corresponding to their $(n+1)$-tiles and define $\lft_n$ with them as in equation \eqref{eqn:le(aj)}. As before, some initial partition using the alphabet $\ab_0$ on which the sequence space is based is required to make $\mtcong_0$. This time we are unable to write a direct formula and so we use the recursive form of equation \eqref{eqn:LE(p)} to define
\begin{equation}
\Lft_n((\alet_1, j_1), \ldots, (\alet_{n},j_{n})) = \Lft_{n-1}((\alet_1, j_1), \ldots, (\alet_{n-1},j_{n-1})) + \lft_n(\alet_n, j_n). 
\end{equation}
 The rest of the proofs are made with these adaptations.

\subsection{Tilings of $\R$}
Self-similar and fusion tilings of $\R$ are suspensions of substitution and S-adic systems, respectively, and in this case it is the first return map to a transversal that is conjugate to the IIET. The height function for the suspension is given by a vector $\vec{l}$ of lengths for the tiles. We will denote by $\vec{r}$ the right eigenvector for $\lambda$ that is a probability vector. To 
compare two tiling systems given by different length vectors we follow \cite{ClarkSadunSize} and normalize them by requiring that $\vec{l}\cdot\vec{r} = 1$ for each length vector. 

In the next few examples we choose a few substitutions and compare approximations to the flow views of their transversals with unit length versus natural length tiles. We briefly discuss what dynamical features appear to be visible.

\begin{example}
Figure \ref{fig:ARnaturallengthflowview} shows the flow views for the tribonacci substitution ($A\mapsto AB, \,  B \mapsto AC, \, C \mapsto A$) first as unit tiles and then as its suspension using the normalized natural tile lengths. 
\begin{figure}[ht]
\centering
\includegraphics[width=.9\textwidth]{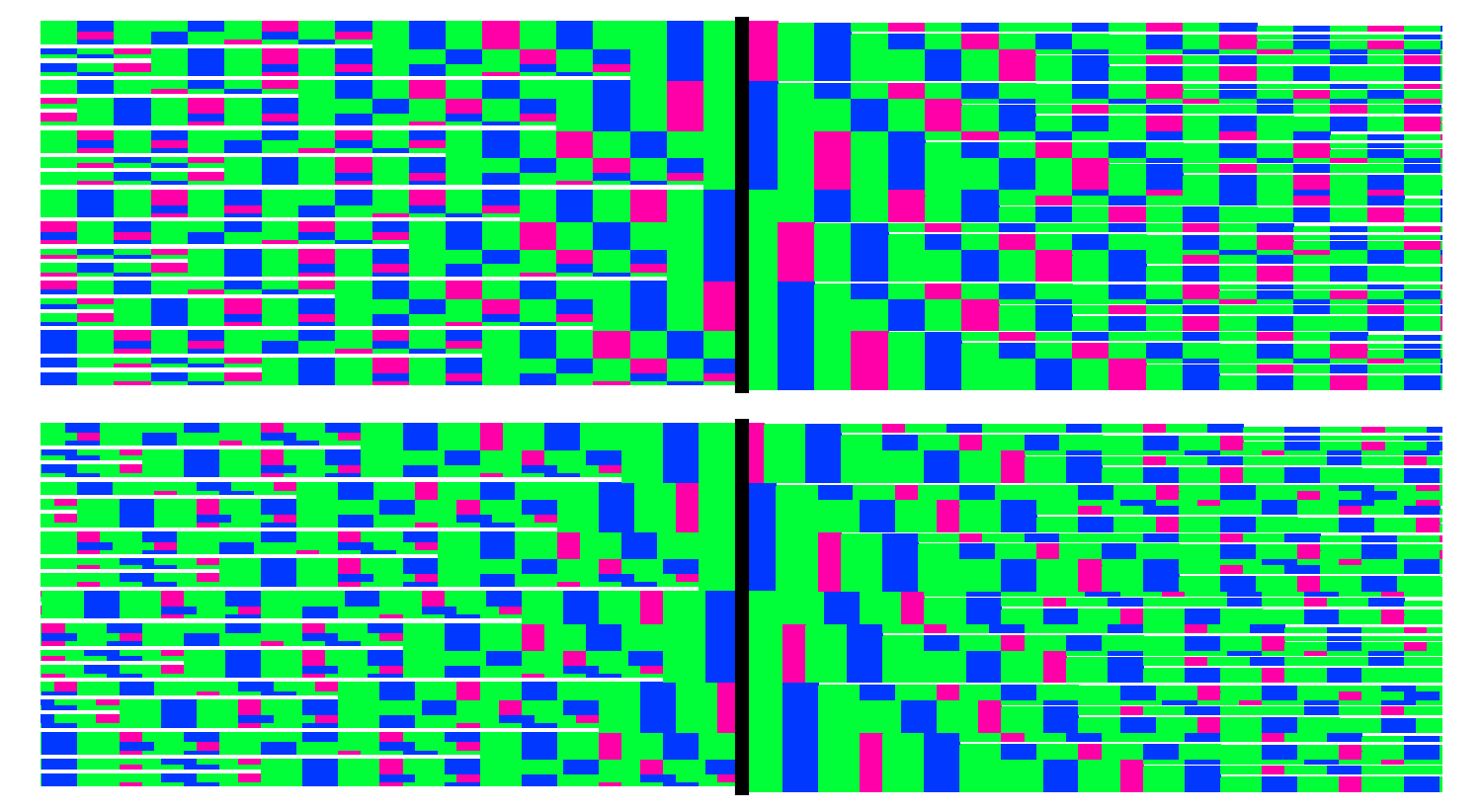}
\caption{Flow view approximations for the tribonacci substitution. Top: unit length tiles. Bottom: natural length tiles.  For the tiling flow, the IIET is the first return map to the transversal of all tilings with an endpoint at 0.}
\label{fig:ARnaturallengthflowview}
\end{figure}

From \cite{ClarkSadunSize} we know that any appropriately scaled suspensions over the tribonacci substitution are topologically conjugate to one another as tiling flows. Inspection of the flow views supports this notion: moving the vertical line at the origin to the right or left, which corresponds to shifting in the tiling space, produces changes that appear to be fairly well controlled and so a conjugacy seems possible. The conjugacy is non-local \cite{ClarkSadunSize}, exemplifying the fact that tiling flows do not have a Curtis-Hedlund-Lyndon theorem. 
\exend
\end{example}

\begin{example}
Figure \ref{fig:naturallengthflowview} shows the flow views for the well-studied non-Pisot substitution rule $A \mapsto ABBB, \,  B \mapsto A$ from figure \ref{fig:npfiietbigits}.
\begin{figure}[ht]
\centering
\includegraphics[width=.9\textwidth]{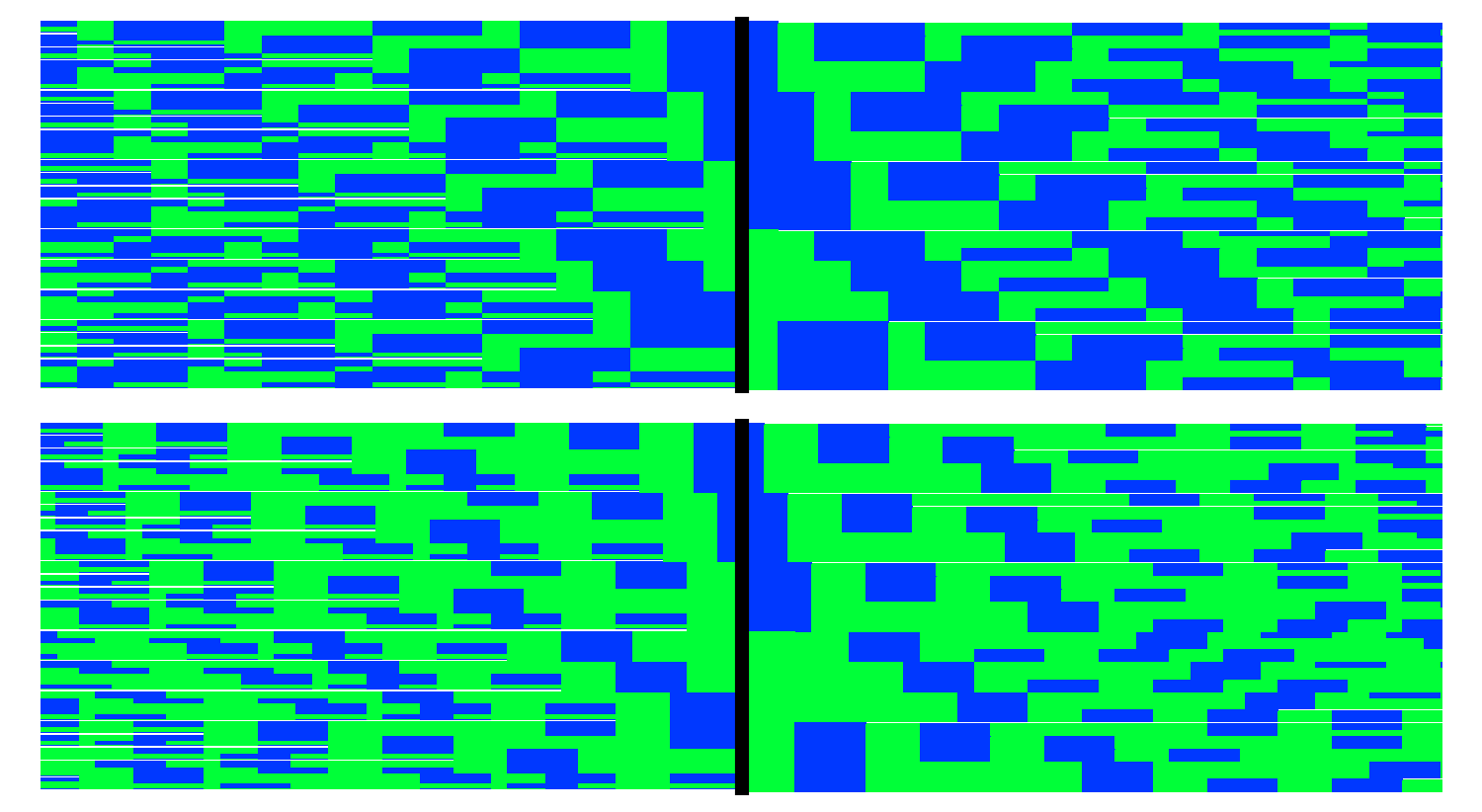}
\caption{Flow view approximations for $A \mapsto ABBB, B \mapsto A$. Top: unit tiles. Bottom: natural length, properly scaled.}
\label{fig:naturallengthflowview}
\end{figure}
This substitution has two distinct eigenvalues of modulus $> 1$. Again from \cite{ClarkSadunSize} we know that even when properly normalized, the tiling flows are not topologically conjugate unless the tile lengths are rationally related. The two flows shown thus represent tiling dynamical systems that are are not topologically conjugate. Again if we imagine translation as moving the vertical bar right or left, we see a sort of distortion that is the obstruction to topological conjugacy.
\exend
\end{example}

\begin{example}[Chacon substitution] The Chacon substitution $A \mapsto AABA$ and $B \mapsto B$ is not primitive, but its subshift is minimal and uniquely ergodic and therefore it has canonical IIETs.
\begin{figure}[ht]
\centering
\includegraphics[width=.9\textwidth]{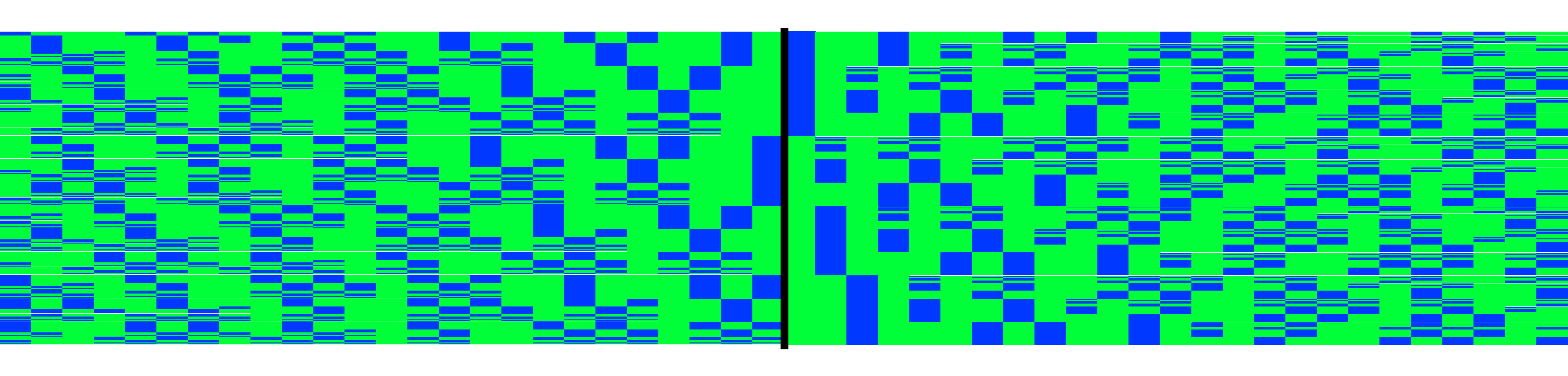}
\caption{There is no meaningful natural length flow view to compare to the Chacon substitution flow view with unit length tiles.}
\label{fig:naturallengthflowviewchac}
\end{figure}
The lack of primitivity prohibits a representative flow view for the natural length tiles because the natural length for ``$B$'' is 0, so the result is a 3-odometer on just the $A$ tile. That flow view would look like a solid green rectangle because all the tiles are green intervals. We show only the unit length flow view in figure \ref{fig:naturallengthflowviewchac}.

The Chacon subshift is weakly mixing but not strongly mixing \cite{ChaconWM}. As we move to the edges of the flow view we see that the organization present at the origin degrades rapidly when compared to the tribonacci or period-doubling flow views.
\exend
\end{example}

\section{Questions, comments, observations}
\label{sec:questions}
The author has fallen down numerous rabbit holes of high (to her) entertainment value. The following is a somewhat random assortment of her favorites.

\smallskip
\noindent{\ding{68}} Any substitution whose subshift is minimal and recognizable has numerous IIET/flow view combinations that represent it. In the simplest case we could consider only the set of canonical IIETs given by all possible dual substitutions. This produces a finite list of $(\mtcong_j,\iiet_j)$s that all represent $(\seqsp,\shift,\mu)$. 
\begin{itemize}
\item What information is contained in the geometry of the maps
$\mtcong_2\circ \mtcong_1^{-1}$ of $[0,1]$?
\item Many substitutions have a {\bf periodic dual substitution} which, if treated as recognizable, would be an odometer. What is the significance of this?
\item Is there an {\bf optimal choice} of dual substitution to represent a system?
\end{itemize}

\noindent{\ding{68}} 
The flow view construction provides a link between all substitutions and S-adic systems that share a common primitive transition matrix $M$.
\begin{itemize}
\item For any given substitution, an S-adic system using any or all of the choices of dual substitution could be used, in which case the representation would be non-canonical. 
\item Alternatively, S-adic systems in which the directive sequence shares a transition matrix can be modeled with a dual substitution for its flow view.
\item Take two subshifts with coordinate maps given by the same dual substitution. This gives a measure-theoretic bijection between the subshifts. How do the properties of the matrix affect the properties of this bijection?
\end{itemize}

\smallskip
\noindent{\ding{68}} The Fibonacci substitution subshift is known to correspond to an {\bf exchange of two intervals}, but the $\iiet$ that appears in figure \ref{IIETIntro} of our introduction  exchanges infinitely many. That's because there are only two choices of dual substitution for $\fib$, and both yield exchanges of infinitely many intervals. There are two main ways to enlarge the scope. One is to allow all canonical $(\mtcong_j,\iiet_j)$s for all powers of the substitution. The other is to allow the subdivisions to vary at each stage, producing an S-adic structure of dual substitutions.  The latter has not been investigated so far, but the former allows the Fibonacci two-interval exchange to appear from our process. With $\fib^2$ and the correct  choice of dual substitution, figure \ref{fig:ffiietapprox} shows how the approximants become the two-interval exchange.
\begin{figure}[ht]
\centering
\includegraphics[width=.9\textwidth]{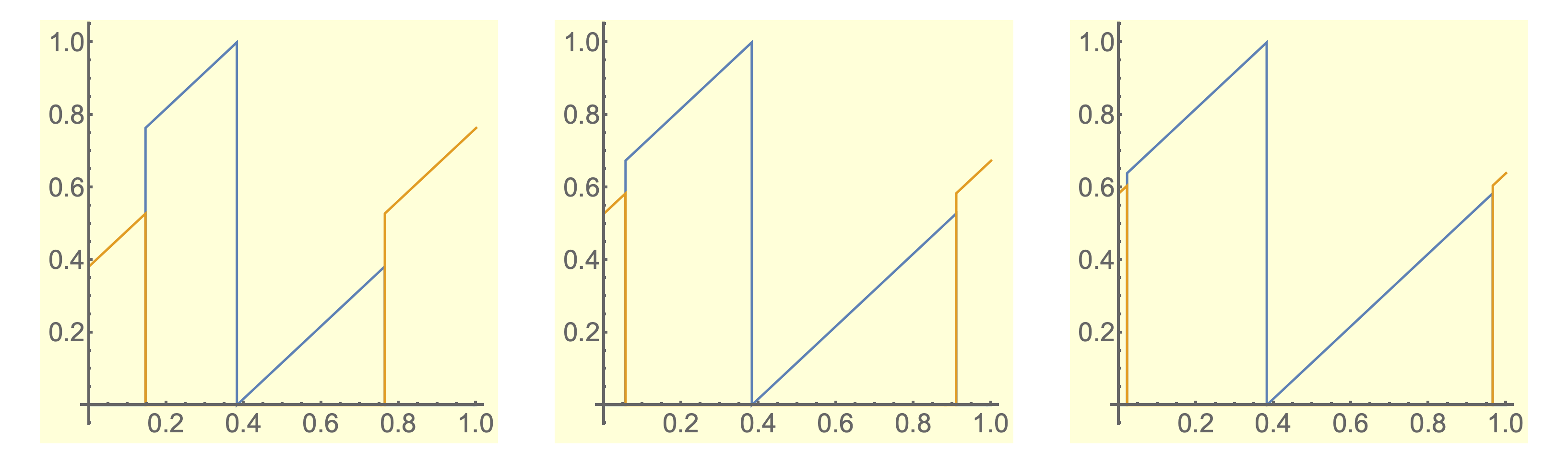}
\caption{The first three approximants for an IIET of $\fib^2$.}
\label{fig:ffiietapprox}
\end{figure}
\begin{itemize}
\item {\sc Conjecture:} If a substitution subshift is measurably conjugate to a {\bf finite} interval exchange transformation, that transformation is a member of the family of canonical infinite interval exchange transformations of its powers. 
\item 
One can define the {\bf \em efficiency} of an IIET representation in terms of how many total translations/intervals are possible given the size of the substitution versus how many the IIET actually needs. For instance, since the Fibonacci subshift can be represented with an IET of two intervals, the efficiency of any other canonical representation can be compared.
\end{itemize}

\smallskip
\noindent{\ding{68}} The destination under $\mtcong$ of a randomly selected tiling is `expected' to be
$\int_\seqsp \mtcong(\T) d\mu = \int_0^1 x \, dx = 1/2$.  This indicates that elements of $\mtcong^{-1}(1/2)$ can be considered the most average or representative elements of $\seqsp$. The stability and other properties of this set under choice of dual substitution is not yet known.

\smallskip
\noindent{\ding{68}} In \cite{OlgaHenk}, the authors begin with the dyadic odometer as an IIET and then `twist' it, obtaining new translation surfaces encoding, eventually, S-adic or substitution subshifts as minimal components. Remarkably, a power of the Fibonacci substitution appears as a minimal component and does not have the odometer as a factor \cite[Section 5.4]{OlgaHenk}. 
\begin{itemize}
\item The method is a sort of inverse of the one in this paper, beginning with an IIET and ending with a subshift. To what degree can the two together classify the IIETs of self-inducing subshifts?
\item Can our method `find' the dyadic odometer for the Fibonacci substitution? If so can it find other, perhaps Fibonacci-number-adic, odometers?
\end{itemize}

\bibliography{master}{}

\bibliographystyle{plain}

\end{document}